\numberwithin{equation}{section}
\newtheorem{thm}{Theorem}[section]
\newtheorem{lem}[thm]{Lemma}
\newtheorem{prop}[thm]{Proposition}
\newtheorem{defn}[thm]{Definition}
\theoremstyle{definition}
\newtheorem{rem}[thm]{Remark}
\theoremstyle{remark}
\newcommand{\R}{\mathbb{R}}
\newcommand{\N}{\mathbb{N}}
\DeclareMathOperator{\dive}{div}
\DeclareMathOperator{\sign}{sign}
\newenvironment{sistema}%
{\left\{\begin{array}{@{}l@{}}}{\end{array}\right.}
\patchcmd{\abstract}{\scshape\abstractname}{\textbf{\abstractname}}{}{}
\def\@makefnmark{} 
\begin{document}


\title[Piecewise nonlinear materials]
{Piecewise Nonlinear Materials and\\ Monotonicity Principle}
\author[Corbo Esposito, Faella, Mottola, Piscitelli, Prakash, Tamburrino]{
Antonio Corbo Esposito$^1$, Luisa Faella$^1$, Vincenzo Mottola$^1$, Gianpaolo Piscitelli$^2$, Ravi Prakash$^3$, Antonello Tamburrino$^{1,4}$}\footnote{\\$^1$Dipartimento di Ingegneria Elettrica e dell'Informazione \lq\lq M. Scarano\rq\rq, Universit\`a degli Studi di Cassino e del Lazio Meridionale, Via G. Di Biasio n. 43, 03043 Cassino (FR), Italy.\\
$^2$Dipartimento di Scienze Economiche Giuridiche Informatiche e Motorie, Universit\`a degli Studi di Napoli Parthenope, Via Guglielmo Pepe, Rione Gescal, 80035 Nola (NA), Italy.\\ 
$^3$Departamento de Matem\'atica, Facultad de Ciencias F\'isicas y Matem\'aticas, Universidad de Concepci\'on, Avenida Esteban Iturra s/n, Bairro Universitario, Casilla 160 C, Concepci\'on, Chile.\\
$^4$Department of Electrical and Computer Engineering, Michigan State University, East Lansing, MI-48824, USA.\\
Email: corbo@unicas.it, l.faella@unicas.it, vincenzo.mottola@unicas.it, gianpaolo.piscitelli@uniparthenope.it (corresponding author), rprakash@udec.cl, antonello.tamburrino@unicas.it.}

\setcounter{tocdepth}{1}

\begin{abstract}
This paper deals with the Monotonicity Principle (MP) for nonlinear materials with piecewise growth exponent. The results obtained are relevant because they enable the use of a fast imaging method based on MP, applied to a wide class of problems with two or more materials, at least one of which is nonlinear.

The treatment is very general and makes it possible to model a wide range of practical configurations such as Superconducting (SC), Perfect Electrical Conducting (PEC) or Perfect Electrical Insulating (PEI) materials.


A key role is played by the average Dirichlet-to-Neumann operator, introduced in \cite{corboesposito2021monotonicity}, where the MP for a single type of nonlinearity was treated.

Realistic numerical examples confirm the theoretical findings.
\end{abstract}
\maketitle

\section{Introduction}
This contribution falls within the framework of a nonlinear generalization of the Calderon problem \cite{alessandrini1989remark,calderon1980inverse,calderon2006inverse}. Specifically, it consists in retrieving the electrical conductivity $\sigma=\sigma(x,|\nabla u (x)|)$ appearing in the nonlinear Laplace type equation:
 \begin{equation}\label{gproblem}
\begin{cases}
\dive\Big(\sigma (x, |\nabla u(x)|)\nabla u (x)\Big) =0\ &\text{in }\Omega\vspace{0.2cm}\\
u(x) =f(x)\qquad\  &\text{on }\partial\Omega,
\end{cases}
\end{equation}
where $f$ is the boundary data and $\Omega$ is a region filled by an electrical conducting material. The nonlinear electrical conductivity $\sigma$ in $\Omega$ is retrieved tomographically from the knowledge of boundary data evaluated on $\partial \Omega$. Specifically, the electrical conductivity is retrieved from the knowledge of the Dirichlet-to-Neumann operator \begin{equation*}
\Lambda_\sigma  :f(x)\
\mapsto\ \sigma (x, |\nabla u^f (x)|) \partial_n u^f(x),
\end{equation*}
that is the operator mapping the applied boundary potential $f$ on $\partial \Omega$, into the electrical current density, entering the domain $\Omega$ through its boundary $\partial \Omega$. As is well known, this is a strongly ill-posed inverse problem that, moreover, presents nonlinear constitutive relationships.

In the case of linear materials, i.e. $\sigma = \sigma(x)$, the Dirichlet-to-Neumann (DtN) operator $\Lambda_\sigma$ is of paramount importance for retrieving $\sigma$ from boundary data. In this framework, a relevant Monotonicity Property was found by Gisser, Isaacson and Newell in \cite{gisser1990electric}:
\begin{equation}
\label{MP_DtN}
\sigma_1(x) \leq \sigma_2(x) \text{ for a.e. }x \in \Omega \quad\Longrightarrow\quad \left\langle{\Lambda}_{\sigma_{1}} f ,f \right\rangle \leq \left\langle{\Lambda}_{\sigma_2}  f ,f \right\rangle\quad \forall f.
\end{equation}
Then, Tamburrino and Rubinacci \cite{Tamburrino_2002,Tamburrino2006FastMF} exploited the negation of \eqref{MP_DtN}:
\begin{equation*}
\exists f \ : \  \left\langle{\Lambda}_{\sigma_{1}} f ,f \right\rangle \not \leq \left\langle{\Lambda}_{\sigma_2}  f ,f \right\rangle \quad\Longrightarrow\quad \sigma_1(x) \not \leq \sigma_2(x) \text{ for a.e. }x \in \Omega 
\end{equation*}
to develop a real-time imaging method, the so-called Monotonicity Principle Method (MPM), to solve inverse obstacle problems, where the aim is to reconstruct the shape of the support of anomalies in a given background, from boundary data.

The four features rendering the MPM highly suitable for solving inverse problems are: (i) the computational cost is negligible making the method suitable for online and real-time applications, (ii) the processing can be carried out in parallel, (iii) under proper assumptions, the MPM provides rigorous upper and lower bounds to the regions occupied by the unknown anomalies, even in the presence of noise and a finite number of measurements (see \cite{Tamburrino2016284}, based on \cite{harrach2015resolution,Tamburrino_2002}), and (iv) MPM gives an exact reconstruction of the outer boundary of the unknown anomalous region, in the ideal setting of complete knowledge of the boundary measurement operator and noise free data (see \cite{albicker2020monotonicity,daimon2020monotonicity,griesmaier2018monotonicity,harrach2013monotonicity}).

The MP and the related imaging method MPM are general physical/mathematical features which appear in many physical problems governed by different PDEs, from stationary PDEs to evolutive PDEs and hyperbolic evolutive PDEs.
MPM was originally developed for stationary PDEs arising from static problems (such as Electrical Resistance Tomography, Electrical Capacitance Tomography, Eddy Current Tomography and Linear Elastostatics Tomography) \cite{Calvano201232,garde2022reconstruction,harrach2013monotonicity,Tamburrino_2006,Tamburrino_2002,Tamburrino2003233}, and for stationary PDEs arising as the limit of quasi-static problems, such as Eddy Current Tomography for either large or small skin depth operations \cite{Tamburrino_2006,Tamburrino2006FastMF,Tamburrino_2010}. 
Other monotonicity-based reconstruction methods can be found in \cite{garde2022simplified} for reconstructing piecewise constant layered conductivities and in \cite{arens2023monotonicity} for the Helmholtz equation in a closed cylindrical waveguide with penetrable scattering objects.

MP was discovered and applied to tomography for problems governed by parabolic evolutive PDEs, (see \cite{Su2017,Su_2017,tamburrino2021themonotonicity,Tamburrino2015159,Tamburrino20161,Tamburrino_2016_testing}), to phenomena governed by Helmotz equations arising from wave propagation problems (hyperbolic evolutive PDEs) in time-harmonic operations (see \cite{albicker2020monotonicity,albicker2023monotonicity,daimon2020monotonicity,griesmaier2018monotonicity,AT_WAVE2015}). MP for the Helmoltz equation arising from (steady-state) optical diffuse tomography was introduced in \cite{meftahi2020uniqueness}. 
Regularization for MPM is introduced in \cite{garde2017convergence,Rubinacci20061179}. 

The MPM has been applied to many different engineering problems. The first experimental validation of the MPM for Eddy Current Tomography is shown in  \cite{Tamburrino201226}. Other results of the MPM applied to Tomography and Nondestructive Testing, can be found in \cite{soleimani2006shape,ventre2016design}.
MPM is applied to locate breast cancer via Electrical Impedance Tomography in \cite{flores2010electrical}.
MPM is applied to the tomography of two-phase materials by means of either Electrical Capacitance Tomography, Electrical Resistance Tomography, or Magneto Inductance Tomography in \cite{soleimani2007monotonicity}.  
 Finally, MPM has also been applied to the homogenization of materials \cite{7559815} and to the inspection of concrete rebars via Eddy Current Testing \cite{DeMagistris2007389,Rubinacci2007333}.

It is worth noting that regardless of the underlying governing equation and/or the specific measurement technique, the MPM imaging method is underpinned by some form of the Monotonicity Principle.

The subject of inverse problems with nonlinear materials is a new area of research, as clearly stated in \cite{lam2020consistency}:
\lq\lq... the mathematical analysis for inverse problems governed by nonlinear Maxwell's equations is still in the early stages of development.\rq\rq .
In this framework, the fractional $p-$biharmonic operator has been treated in \cite{kar2023fractional}, whereas the $p$-Laplacian is treated in \cite{brander2018monotonicity,carstea2020recovery,guo2016inverse,salozhong2012}.
Semilinear problems have been treated in \cite{griesmaier2022inverse} for the wave propagation equation with a nonlinear refraction index, in \cite{lin2022monotonicity} for a fractional semilinear elliptic equation with power type nonlinearities, and in \cite{HARRACH2023113188} for an elliptic equation with a piecewise analytic diffusion term.


The most comprehensive contribution in the area of inverse problems with quasilinear constitutive relationships \eqref{gproblem} is \cite{corboesposito2021monotonicity}, where the Monotonicity Principle for the (quasilinear) elliptic case is proved. Specifically, the authors introduced the new operator $\overline{\Lambda}_\sigma$, termed as the average DtN, defined as:
\begin{equation}\label{average_DtN}
\overline{\Lambda}_\sigma: f\in X^p_\diamond(\partial\Omega)\mapsto  \int_{0}^{1}\Lambda_\sigma  \left( \alpha f\right) \text{d}\alpha\in X^p_\diamond(\partial\Omega)',
\end{equation}
where $X^p_\diamond(\partial\Omega)$ is a proper functional space introduced in Section \ref{unified_sec}.
The average DtN is of paramount importance because it satisfies a MP (see \cite[Th. 4.3]{corboesposito2021monotonicity}), unlike the \lq\lq classical\rq\rq \ DtN which does not satisfy the MP, apart from the very special case of the $p-$Laplacian.

A quasilinear constitutive relationship is treated in \cite{corboesposito2021monotonicity}, where the electrical conductivity has a growth exponent$^*$\footnote{$^*$We say that a function $Q(E)$ has a $p-$growth if $0<c_0\leq |\sigma(E)/E^p|\leq c_1 <+\infty$, for large $E>0$ and for some proper constants $c_0$ and $c_1$.} $2\le p<+\infty$ which is constant over the domain $\Omega$. 
The original content of the present contribution is twofold and consists in the treatment of a much larger class of materials. Specifically, we consider materials made up of different phases where, in each phase, the material has either a different growth exponent, or is a Perfect Electric Conductor (PEC), or is a Perfect Electric Insulator (PEI). Also, growth exponents $1 < p < 2$ are accounted for. The latter case is relevant for including type II supeconductors \cite{rhyner1993magnetic} in the treatment.


This mathematical generalization is relevant for real-world applications, where the growth exponent is smaller than 2, and/or it is non constant over $\Omega$, and/or some regions are well approximated by either a PEC or a PEI. For instance, type II superconductors \cite{abrikosov1957magnetic,bean1962magnetization,anderson1962theory,ginzburg1950theory} can be modeled by a growth exponent $1<p<2$. From the mathematical standpoint, this means that the electrical conductivity $\sigma$ is bounded by monotonically decreasing functions (growth exponent $p-2$), rather than monotonically increasing functions, as for $2\le p<+\infty$.
Regarding the non constancy of the growth exponent, this is a case appearing in many practical configurations involving two or more different materials. For instance, one may have a nonlinear material embedded in a linear material. In all these cases the non constant growth exponent $p$ is piecewise constant. 
Similarly, PEC (infinite electrical conductivity) and PEI (vanishing electrical conductivity) are excellent approximations of real-world applications, because the electrical conductivity varies over a large range of orders of magnitude \cite{haus1989electromagnetic}.
PEC and PEI appear naturally in nonlinear problems where the boundary data is either large or small enough (see \cite{corboesposito2023thep0laplacesignature,corboesposito2023theplaplacesignature}).

The paper is organized as follows: in Section \ref{unified_sec} we present the problem; in Section \ref{convergence_sec} we give a fundamental convergence result for the solutions; in Section \ref{connection_sec} we study the connection between the Dirichlet Energy and the Dirichlet-to-Neumann operator; in Section \ref{monotonicity_sec} we prove the Monotonicity Principles for this very general setting. In Section \ref{applications_sec} we provide numerical validations of the previous results and in Section \ref{conclusions_sec}, we present our conclusions.

\section{A unified treatment for Nonlinear Inverse Problems}\label{unified_sec}
This section presents a general and unified framework for treating the steady ohmic conduction, i.e. problem \eqref{gproblem}, in the presence of materials comprising different phases. For the sake of simplicity we refer to materials comprising two different phases, however the treatment is general and can be extended to materials consisting of multiple phases.

We first introduce the basic notations followed by the formal statement of the problem and the required assumptions. Finally, we prove the existence and uniqueness of the solution.

\subsection{Notations}
Throughout this paper, $\Omega$ is the region occupied by the conducting material. We assume $\Omega\subset\R^n$, $n\geq 2$, to be an open bounded domain (i.e. an open and connected set) with Lipschitz continuous boundary.

Hereafter we assume that the first material occupies the open bounded set $A\subset\subset\Omega$, with a Lipschitz continuous boundary and made up of a finite number of connected components $A_i$, for $i=1,...,M$. The second material occupies $B:=\Omega\setminus\overline A$, which is still a domain (see Figure \ref{fig_01_AB}).

We denote by $\mathbf{\hat{n}}$ the outer unit normal defined on $\partial\Omega$, by $\langle \cdot, \cdot\rangle$ the duality product on $L^2(\partial \Omega)$ and by $dS$ the $(n-1)$-dimensional Hausdorff measure. 
Moreover, we set
\[
L^\infty_+(\Omega):=\{\theta\in L^\infty(\Omega)\ |\ \theta\geq c_0\ \text{a.e. in}\ \Omega, \ \text{for a positive constant}\ c_0\}.
\]
Furthermore,  {the Sobolev space} $W^{1,p}_0(\Omega)$ is the closure set of $C_0^1(\Omega)$ with respect to the $W^{1,p}-$norm.

The applied boundary voltage $f$ belongs to the abstract trace space $B_{p}^{1-\frac 1p,p}(\partial\Omega)$, which, for any bounded Lipschitz open set, is a Besov space (refer to \cite{JERISON1995161,leoni17}) with the following norm:
\[
||u||_{B^{1-\frac 1p,p}(\partial\Omega)}=||u||_{L^p(\partial\Omega)}+|u|_{B^{1-\frac 1p,p}(\partial\Omega)}<+\infty,
\]
where $|u|_{B^{1-\frac 1p,p}(\partial\Omega)}$ is the Slobodeckij seminorm:
\[
|u|_{B^{1-\frac 1p,p}(\partial\Omega)}=\left(\int_{\partial\Omega}\int_{\partial\Omega}\frac{|u(x)-u(y)|^p}{||x-y||^{N-1+(1-\frac 1p)p} }dS (y)d S (x) \right)^\frac 1p,
\]
see Definition 18.32, Definition 18.36 and Exercise 18.37 in \cite{leoni17}.

This guarantees the existence of a function in $W^{1,p}(\Omega)$ whose trace is $f$ \cite[Th. 18.40]{leoni17}.

For the sake of brevity, we denote this space by $X^p(\partial \Omega)$ and its elements can be identified as the functions in $W^{1,p}(\Omega)$, modulo the equivalence relation $f\in [g]_{X^p(\partial \Omega)}$ if and only if $f-g\in W^{1,p}_0(\Omega)$, see \cite[Th. 18.7]{leoni17}.

Finally, we denote by $X^p_\diamond (\partial \Omega)$ the set of elements in $X^p(\partial \Omega)$ with vanishing integral mean on $\partial\Omega$ with respect to the measure $dS$.  
 
\subsection{Statement of the Problem}
Our aim is to prove a Monotonicity Principle for two-phase materials operating in steady ohmic conduction.

In this regime, the electric field $\bf E$ is expressed in terms of the electric scalar potential as ${\bf E}(x)=-\nabla u(x)$.

For a quasilinear material, Ohm's law ${\bf J}(x)=\sigma(x,E(x)){\bf E}(x)$ is expressed as
 \begin{equation} \label{gOhm}
 {\bf J} (x)=- \sigma (x, |\nabla u(x)|)\nabla u(x)\quad\text{in }\Omega,
 \end{equation}
where $\sigma$ is the electrical conductivity, and ${\bf J}$ is the electric current density. A quasilinear material reduces to a linear material when $\sigma$ does not depend on the electric field, i.e. $\sigma = \sigma (x)$. For a PEC we have $\bf E=\bf 0$, i.e. $\nabla u = \bf 0$, whereas for a PEI we have $\bf J=\bf 0$, i.e. $\sigma (x, |\nabla u(x)|)\nabla u(x) = \bf 0$. It is worth noting that a PEC corresponds to $\sigma = +\infty$, whereas a PEI corresponds to $\sigma = 0$.

We assume that the material in $B$ has a $p-$growth ($1 < p < +\infty$) and that the material in $A$ either has a $q-$growth, or is a PEC, or is a PEI.


\begin{figure}[!ht]
	\includegraphics[width=0.8\textwidth]{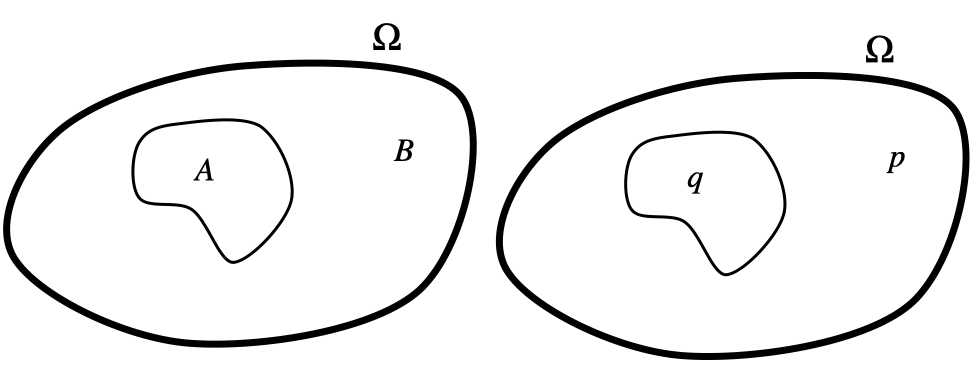}
	\caption{The outer region admits $p$ as the growth exponent, the inner region admits $q$ as the growth exponent.}
	\label{fig_01_AB}
\end{figure}

This setting is quite general from the mathematical point of view and it is very versatile from the physical/engineering viewpoint, as shown in Table \ref{tab_1_scheme}, which summarizes the cases reported in this contribution. Specifically, (i) a growth exponent smaller than 2 corresponds to an electrical conductivity monotonically decreasing with the magnitude of the electric field $E$, (ii) a growth exponent equal to 2 corresponds to an electrical conductivity independent of $E$, i.e. a linear material, and (iii) a growth exponent larger than 2 corresponds to an electrical conductivity increasing with $E$ (see Section \ref{sub_sec_assumptions}).

The setting also includes cases where the electrical conductivity is increasing or constant with the electric field in one phase and is decreasing in the other, for instance $p \ge 2$ and $1<q<2$. Other cases included in this treatment refer to materials where the order relation between the electrical conductivities in regions $A$ and $B$ changes with the value of the magnitude of the electric field $E$. Also, $p=q$ corresponds to $A=\emptyset$. In addition, for $p=q=2$ this case corresponds to \cite{gisser1990electric,Tamburrino_2002}, whereas for $p=q>2$ it corresponds to \cite{corboesposito2021monotonicity}.


\begin{table}[h]
\centering
\begin{tabular}{|c|c|c|c|c|c|}
\hline \backslashbox{\qquad $B$}{$A$\qquad} & $1<q<2$ & $q=2$ & $2<q<+\infty$ & PEC & PEI\\
\hline $1<p<2$ & $\bullet$ & $\bullet$ & $\bullet$& $\bullet$ & $\bullet$\\ 
$p=2$ & $\bullet$ & \cite{gisser1990electric,Tamburrino_2002} & $\bullet$ & $\bullet$ & $\bullet$ \\
$2<p<+\infty$ & $\bullet$ & $\bullet$ & $\bullet$, (\cite{corboesposito2021monotonicity} for $p=q$) & $\bullet$ & $\bullet$\\
\hline
\end{tabular}
\caption{Scheme of all cases reported within this general framework. The symbol $\bullet$ stands for \lq\lq yes\rq\rq, i.e. the specific combination of materials in phases $A$ and $B$ can be treated within this setting. References are reported for those specific cases treated in past works.}
\label{tab_1_scheme}
\end{table}


\subsection{The assumptions} \label{sub_sec_assumptions}
The well-posedness of the forward problem in the sense of Hadamard is the minimal requirement to formulate the associated inverse problem. This is guaranteed by the following assumptions on $\sigma_B: B\times[0,+\infty)\to\R$ and $\sigma_A: A\times[0,+\infty)\to\R$.

Firstly, we recall the definition of the Carath\'eodory function.
\begin{defn}
$\sigma:\Omega\times[0,+\infty)\to\R$ is a Carath\'eodory function iff:
\begin{itemize}
\item $x\in\overline\Omega\mapsto \sigma(x,E)$ is Lebesgue-measurable for every $E\in[0,+\infty)$,
\item $E\in [0,+\infty)\mapsto \sigma(x, E)$ is continuous for almost every $x\in\Omega$.
\end{itemize}
\end{defn}

\begin{itemize}
\item[{\bf (P1)}] $\sigma_B$ and $\sigma_A$ are Carath\'eodory function. 
\item[{\bf (P2)}] For fixed $1<p<+\infty$, there exists three positive constants $\underline{\sigma}\le\overline{\sigma}$ and $E_0$ 
such that: 
\[
\begin{split}
&\underline{\sigma} \left(\frac{E}{E_0}\right)^{p-2}\leq\sigma_{B}(x, E)\leq \overline{\sigma}\left[1+\left( \frac{E}{E_0} \right)^{p-2}\right]\qquad \text{if}\ p\ge 2,\\
&\underline{\sigma} \left(\frac{E}{E_0}\right)^{p-2}\leq\sigma_{B}(x, E)\leq \overline{\sigma}\left( \frac{E}{E_0} \right)^{p-2}\qquad\qquad\quad\   \text{if}\ 1<p< 2,\\
\end{split}
\]
$\text{for a.e.}\ x\in {\overline B}\ \text{and}\ \forall E\ge 0$.
\item[{\bf (P3)}] For fixed $1<p <+\infty$, there exists $\kappa>0$ such that:
\begin{equation*}
\begin{split}
(\sigma_B(x,E_2)&{\bf E}_2-\sigma_B(x,E_1){\bf E}_1)\cdot( {\bf E}_2-{\bf E}_1)\\
        &\geq
        \begin{cases}
            \kappa|{\bf E}_2-{\bf E}_1|^p\ &\text{if} \ p\geq 2\\
            \kappa(1+ |{\bf E}_2|^2+|{\bf E}_1|^2)^\frac{p-2}2|{\bf E}_2-{\bf E}_1|^2\ &\text{if}\ 1<p<2
        \end{cases}
        \\
        \end{split}
       \end{equation*}
       \ $\text{for a.e.}\ x\in B$, and for any ${\bf E}_1,{\bf E}_2\in\R^n$.
\end{itemize}

\begin{itemize}
\item[{\bf (Q1)}] For fixed $1< q<+\infty$, there exists three positive constants $\underline{\sigma}\le\overline{\sigma}$ and $E_0$
such that:  
\[
\begin{split}
&\underline{\sigma} \left(\frac{E}{E_0}\right)^{q-2}\leq\sigma_{A}(x, E)\leq \overline{\sigma}\left[1+\left( \frac{E}{E_0} \right)^{q-2}\right] \qquad \text{if}\ q\ge 2,\\ &\underline{\sigma} \left(\frac{E}{E_0}\right)^{q-2}\leq\sigma_{A}(x, E)\leq \overline{\sigma}\left( \frac{E}{E_0} \right)^{q-2}\qquad\qquad\quad\ \text{if}\ 1<q< 2,
\end{split}
\]
$\text{for a.e.}\ x\in A\ \text{and}\ \forall E>0$.
\item[{\bf (Q2)}] For fixed $1< q<+\infty$, there exists a positive constant $\kappa$ 
such that: 
\begin{equation*}
\begin{split}(\sigma_A(x,E_2)&{\bf E}_2-\sigma_A(x,E_1){\bf E}_1)\cdot( {\bf E}_2-{\bf E}_1)      \\ 
&\geq
        \begin{cases}
            \kappa|{\bf E}_2-{\bf E}_1|^q\ &\text{if} \ q\geq 2\\
            \kappa(1+ |{\bf E}_2|^2+|{\bf E}_1|^2)^\frac{q-2}2|{\bf E}_2-{\bf E}_1|^2\ &\text{if}\ 1<q<2
        \end{cases}
        \\
\end{split}
\end{equation*}$\ \text{for a.e.}\ x\in A,$
for any ${\bf E}_1,{\bf E}_2\in\R^n$.

\item[{\bf (R)}] $\sigma_{A}(x, E)=0$,  $\text{for a.e.}\ x\in A$.
\item[{\bf (S)}] $\sigma_{A}(x, E)=+\infty$,  $\text{for a.e.}\ x\in A$.
\end{itemize}

Assumptions (QX), (R) and (S) are alternative. Throughout the paper, we will treat all the cases arising from these assumptions. Moreover, let us remark that $A$ could be $\emptyset$; in this case the assumptions (QX), (R) and (S) are not considered.

Figure \ref{fig_02_sigma} represents the geometrical interpretations of (P2). Similar arguments are valid for (Q1).

\begin{figure}[!ht]
	\centering
\includegraphics[width=0.45\textwidth]{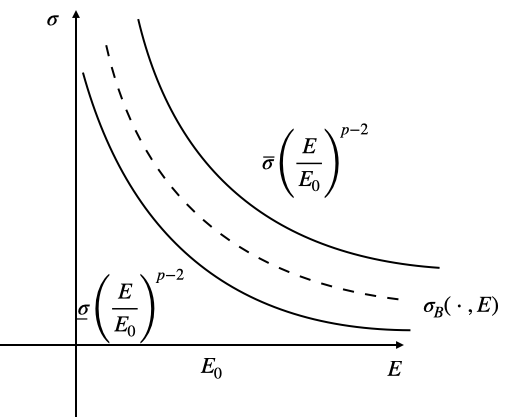}
\includegraphics[width=0.45\textwidth]{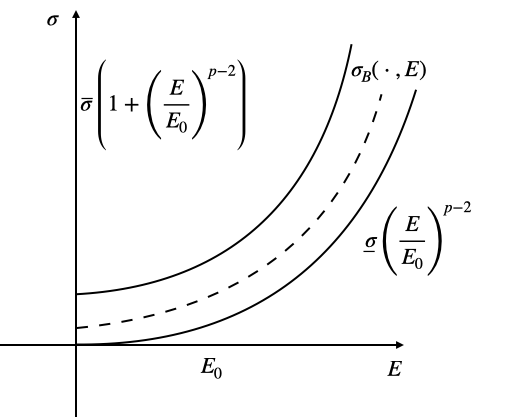}
\caption{The impact of constraints in (P2) on electrical conductivity, for $1<p<2$ (left) and $p\ge 2$ (right). Dashed lines represent the upper and lower bounds, whereas the solid line represents the actual electrical conductivity.}
\label{fig_02_sigma}
\end{figure}

It is worth nothing that assumptions (PX), (QX), (R) and (S) make it possible to include many constitutive relationship encountered in practical applications. In the following the meaning of the main assumptions is described.

\begin{rem}
Assumption (P1) requires the constitutive relationship of the material to be continuous in the electric field $E$, in both regions $A$ and $B$.
\end{rem}
\begin{rem}
Assumptions (P2) and (Q1) imply that
\begin{equation}
    \label{min_max_estimates}
\underline{\sigma} \left(\frac{E}{E_0}\right)^{r-2}\leq\sigma (x,E)\leq \overline{\sigma}\left[1+\left(\frac{E}{E_0}\right)^{s-2}\right]
\end{equation}
for a.e. $x\in \Omega$ and for all $E>0$, as $r=\min\{p,q\}$ and $s=\max\{p,q\}$.

In other terms, the behaviour of $\sigma$ as a function of the electric field $E$, is bounded by two proper order monomials.
\end{rem}


\begin{rem} \label{rem_ipotesi_lin}
(P3) and (Q2) imply that the constitutive relationship, expressed in term of the function $E \mapsto \sigma(x,E)E$, is monotonically increasing. Therefore, the map $E \mapsto Q_\sigma(x,E)$ is convex and is mandatory to prove the Monotonicity Principle for the average DtN.

Moreover, the first inequality in assumptions (P3) and (Q2) is simply a generalization of the standard inequality (see \cite[Sec.12, eq (I)]{lindqvist2019notes} and \cite[Lem. 2.2]{brasco2013anisotropic})
\begin{equation*}
(E_2^{k}\ {\bf E}_2-E_1^{k}\ {\bf E}_1) \cdot ({\bf E}_2-{\bf E}_1)\geq \frac 1{2^{k+1}}|{\bf E}_2-{\bf E}_1|^{k+2} \quad \forall k\geq 0.
\end{equation*}
Furthermore, the first inequality in (P3) is used in \cite[eq. (3.4)]{lam2020consistency} for the particular case of $p=2$ and $A=\emptyset$.
Similarly, the second inequality in (P3) and (Q2) is simply a generalization of the standard inequality (see \cite[Sec.12, eq (VII)]{lindqvist2019notes} and \cite[Lem. 2.2]{brasco2013anisotropic}):
\begin{equation*}
(E_2^{k}\ {\bf E}_2-E_1^{k}\ {\bf E}_1) \cdot ({\bf E}_2-{\bf E}_1)\geq (k+1)(1+ |{\bf E}_2|^2+|{\bf E}_1|^2)^\frac{k}{k+2}|{\bf E}_2-{\bf E}_1|^2 \quad \forall 0>k>-1,
\end{equation*}
where $k=p-2$. Both inequalities in (P3) and (Q2) are mandatory to get a convergence result for the gradient of the scalar potential (see Proposition \ref{lpconv2}). Specifically, (P3) and (Q2) make it possible to \lq\lq code\rq\rq \ the physical requirement that the convergence of the applied boundary potential implies the convergence of the related electrical field and the convergence of the related electrical current densities (see Remark \ref{rem_conv_J}).
\end{rem}

\begin{rem}
Assumptions (R) and (S) model perfect electrical insulators (PEI) and perfect electrical conductors (PEC).

Furthermore, dealing with (R) and (S) means that we are treating a problem in a holed domain, as is easily seen from the associated variational formulations of problems \eqref{minimum_PEI}-\eqref{minimum_PEC}.Optimization problems on domains with holes have received considerable interest in recent years, see e.g. \cite{della2020optimal,gavitone2021isoperimetric,paoli2020stability,paoli2020sharp} and reference therein.
\end{rem}

\subsection{The mathematical model} The electric scalar potential $u$ 
solves the steady current problem \eqref{gproblem}, where $f\in X_\diamond^p(\partial\Omega)$. Problem \eqref{gproblem} is meant in the weak sense, that is
\begin{equation*}
\int_{\Omega }\sigma \left( x,| \nabla u(x) |\right) \nabla u (x) \cdot\nabla \varphi (x)\ \text{d}x=0\quad\forall\varphi\in C_c^\infty(\Omega),
\end{equation*}
and $u$ restricted to $B$ belongs to $W^{1,p}(B)$, whereas $u$ restricted to $A$ belongs to $W^{1,q}(A)$. Moreover, the solution $u$, as a whole, is an element of the larger functional space $W^{1,p}(\Omega)\cup W^{1,q}(\Omega)$. Hereafter, we will exploit (i) if $p\leq q$ then $W^{1,p}(\Omega)\cup W^{1,q}(\Omega)=W^{1,p}(\Omega)$, and (ii) if $p\geq q$ then $W^{1,p}(\Omega)\cup W^{1,q}(\Omega)=W^{1,q}(\Omega)$. The solution $u$ satisfies the boundary condition in the sense that $u-f\in W_0^{1,p}(\Omega)\cup W_0^{1,q}(\Omega)$ and we write $u|_{\partial\Omega}=f$.

The solution $u$ is variationally characterized as


\begin{equation}\label{minimum_BA}
\min_{\substack{u\in W^{1,p}(\Omega)\cup  W^{1,q}(\Omega)\\ u=f\ \text{on}\ \partial \Omega}}\mathbb{E}_\sigma
\left(  u \right).
\end{equation}

In (\ref{minimum_BA}), the functional $\mathbb{E}_\sigma\left( u\right)$ is the Dirichlet Energy
\begin{equation}\label{energy_BA}
\mathbb{E}_\sigma
\left(  u \right) = \int_{\Omega} Q_\sigma (x,|\nabla u(x)|)\ \text{d}x= \int_{B} Q_B (x,|\nabla u(x)|)\ \text{d}x+ \int_A Q_A (x,|\nabla u(x)|)\ \text{d}x
\end{equation} 
where $Q_B$ and $Q_A$ are the (nonnegative) Dirichlet Energy densities in $B$ and in $A$, respectively:
\begin{align*}
& Q_{B} \left( x,E\right)  :=\int_{0}^{E} \sigma_B\left( x,\xi \right)\xi  \text{d}\xi\quad \text{for a.e.}\ x\in B\ \text{and}\ \forall E\geq0,\\
& Q_{A}\left( x,E\right)  :=\int_{0}^{E} \sigma_A\left( x,\xi \right)\xi  \text{d}\xi\quad \text{for a.e.}\ x\in A\ \text{and}\ \forall E\geq 0,
\end{align*}
and $\sigma_B$ and $\sigma_A$ are the restriction of the electrical conductivity $\sigma$ in $B$ and $A$. 

At this point let us stress that when $A=\emptyset$, we fall into the case treated in \cite{corboesposito2021monotonicity} for $p\geq 2$. 

Moreover, if $\sigma_A(x,E)=0$, we have a PEI in $A$. The corresponding problem is
\begin{equation}
\label{problem_PEI}
\begin{cases}
\dive (\sigma_B(x, |\nabla v|)\nabla v)=0 & \text{in } B \\
\sigma_B(x, |\nabla v|)\partial_\nu v=0 & \text{on } \partial A\\
v\in W^{1,p}(B), \ 
v=f & \text{on }\partial\Omega.
\end{cases}
\end{equation}
The solution $u$ is variationally defined as the minimum of the following Dirichlet Energy
\begin{equation}
\label{minimum_PEI}
\min_{\substack{u\in W^{1,p}(B)\\ u=f \text{on }\partial\Omega}}\mathbb E_\sigma (u),
\end{equation}
where
\[
\mathbb{E}_\sigma(u)=\int_{B} Q_{B}(x,|\nabla u(x)|)dx.
\]

Similarly, if $\sigma_A(x,E)=+\infty$, we have a PEC in $A$. The corresponding problem is
\begin{equation}
\label{problem_PEC}
\begin{cases}
\dive (\sigma_B(x, |\nabla v|)\nabla v)=0 & \text{in }B \\
|\nabla v|=0 & \text{a.e. in }A\\
\int_{\partial A_i}\sigma(x,|\nabla v(x)|)\partial_\nu v(x)dS=0 & i=1,...,M\\
v\in W^{1,p}(\Omega), \ v=f & \text{on }\partial\Omega.
\end{cases}
\end{equation}
The solution $u$ is variationally defined as the minimum of the following Dirichlet Energy
\begin{equation}
\label{minimum_PEC}\min_{\substack{u\in W^{1,p}(\Omega)\\ u=f \text{ on } \partial\Omega\\ |\nabla u|=0\text{ in } A}}\mathbb E_\sigma (u),
\end{equation}
where
\begin{equation}
    \label{energy_PEC}
\mathbb{E}_\sigma(u)=\int_{B} Q_{B}(x,|\nabla u(x)|)dx.
\end{equation}
Let us stress that, since the solution in \eqref{minimum_PEC} assumes a constant value in $A$, then we can equivalently consider $\mathbb E_\sigma$ equal to \eqref{energy_PEC}  instead of \eqref{minimum_PEC}.

Throughout the paper, we will use the following notation to denote the solutions of the previously described minimum problems.
\begin{defn}
We denote by $u^g$ the unique solution arising from problems \eqref{minimum_BA}, \eqref{minimum_PEI} and \eqref{minimum_PEC}, corresponding to the boundary data $g \in X_\diamond^p (\partial \Omega)$.    
\end{defn}

It is worth noting that (i) if $u^g$ solves problem \eqref{minimum_BA}, then it is in $W^{1,p}(\Omega)\cup W^{1,q}(\Omega)$, (ii) if $u^g$ solves problem \eqref{minimum_PEI}, then it is in $W^{1,p}(B)$ and (iii) if $u^g$ solves problem \eqref{minimum_PEC}, then is in $W^{1,p}(\Omega)$.

The proof of the existence and uniqueness of the solution for \eqref{minimum_BA}, \eqref{minimum_PEI} and \eqref{minimum_PEC} in its variational form, relies on standard methods of the Calculus of Variations, when the Dirichlet Energy Density presents the same growth exponent in any point of the domain $\Omega$. The case treated in this work is nonstandard, because the Dirichlet Energy Density presents different growth exponents in $B$ and $A$ and, hence, we provide a proof in the following.

\begin{thm}
Let $1<p, q<+\infty$,  $f\in X_\diamond^p(\partial\Omega)$ and $\sigma$ satisfies (P1), (P2) and (P3). 
\begin{itemize}
    \item If $A=\emptyset$, then there exists a unique solution of problem \eqref{minimum_BA};
    \item If (Q1)-(Q2) hold, then there exists a unique solution of problem \eqref{minimum_BA};
    \item If (R) hold, then there exists a unique solution of problem \eqref{minimum_PEI};
    \item If (S) hold, then there exists a unique solution of problem \eqref{minimum_PEC}.
\end{itemize}
\end{thm}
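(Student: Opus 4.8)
The plan is to prove all four items by the direct method of the Calculus of Variations, using the pair of different growth exponents (assumptions (Q1)--(Q2)) as the representative case: the case $A=\emptyset$ is its degenerate sub-case (only the $B$-integral survives and one works in $W^{1,p}(\Omega)$), and the PEI case (R) and PEC case (S) differ from it only in the ambient space and in the admissible set. After reducing, by symmetry, to $p\le q$ — so that the minimization space in \eqref{minimum_BA} is $W^{1,p}(\Omega)\cup W^{1,q}(\Omega)=W^{1,p}(\Omega)$, the case $p\ge q$ being identical with $q$ in place of $p$ — I would first record that the infimum is finite. Starting from any $\tilde f\in W^{1,p}(\Omega)$ with trace $f$ and a cut-off $\phi\in C_c^\infty(\Omega)$ with $\phi\equiv1$ on $\overline A$, the function $(1-\phi)\tilde f+c\,\phi$ is admissible, is constant on $A$, and therefore has finite energy even when $q>p$; this is the one point in the existence part where the mismatch of exponents requires a moment's care.

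For existence I would run the standard scheme. The lower bounds in (P2) and (Q1) give $Q_B(x,E)\ge\frac{\underline\sigma}{p}E_0^{2-p}E^{p}$ on $B$ and $Q_A(x,E)\ge\frac{\underline\sigma}{q}E_0^{2-q}E^{q}$ on $A$, so a minimizing sequence $(u_k)$ has $\|\nabla u_k\|_{L^p(B)}$ and $\|\nabla u_k\|_{L^q(A)}$ bounded; together with the Poincaré inequality for functions with prescribed trace on $\partial\Omega$ (valid since $\Omega$, resp. $B$, is connected with Lipschitz boundary and $\mathcal H^{n-1}(\partial\Omega)>0$) this bounds $(u_k)$ in $W^{1,p}(\Omega)$, resp. in $W^{1,p}(B)$ for (R) and in $\{u\in W^{1,p}(\Omega):u=f\text{ on }\partial\Omega,\ \nabla u=0\text{ a.e. in }A\}$ for (S) — in the last case using that $\nabla u=0$ in $A$ makes $\|\nabla u\|_{L^p(\Omega)}=\|\nabla u\|_{L^p(B)}$. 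Passing to a subsequence, $u_k\rightharpoonup u$ weakly; extracting further so that $\nabla u_k|_A\rightharpoonup\zeta$ weakly in $L^q(A)$ and identifying $\zeta=\nabla u|_A$, the limit $u$ is admissible (the affine boundary constraint, and the linear constraint $\nabla u=0$ in $A$, are weakly closed). Since $E\mapsto Q_B(x,E)$ is nondecreasing (its integrand $\sigma_B(x,\xi)\xi$ is nonnegative) and convex (by (P3), as noted in Remark \ref{rem_ipotesi_lin}), the map in the gradient variable $\xi\mapsto Q_B(x,|\xi|)$ is convex and nonnegative, and likewise for $Q_A$ by (Q2); hence $v\mapsto\int_B Q_B(x,|v|)\,\mathrm dx$ and $v\mapsto\int_A Q_A(x,|v|)\,\mathrm dx$ are sequentially weakly lower semicontinuous on $L^p(B)$ and $L^q(A)$ respectively, which yields $\mathbb E_\sigma(u)\le\liminf_k\mathbb E_\sigma(u_k)=\inf$, so $u$ is a minimizer.

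For uniqueness I would use convexity. The functional $\mathbb E_\sigma$ is convex on the convex admissible set; if $u_1,u_2$ are minimizers with value $m$, then $w=\tfrac12(u_1+u_2)$ is admissible and, by convexity of the integrands, $\mathbb E_\sigma(w)\le m$, forcing a.e. pointwise equality in Jensen's inequality for $\xi\mapsto Q_B(x,|\xi|)$ on $B$ and for $\xi\mapsto Q_A(x,|\xi|)$ on $A$. Now $\xi\mapsto Q_B(x,|\xi|)$ is $C^1$ (its gradient $\sigma_B(x,|\xi|)\xi$ is continuous by (P1) and tends to $0$ at the origin by (P2)) and this gradient is strictly monotone by (P3); a $C^1$ convex function with strictly monotone gradient is strictly convex, so the equality forces $\nabla u_1=\nabla u_2$ a.e. in $B$, and (Q2) gives $\nabla u_1=\nabla u_2$ a.e. in $A$ as well. (In case (S) both minimizers are in addition constant on each $A_i$, and these constants coincide because the traces of $u_1$ and $u_2$ on $\partial A_i\subset\partial B$ agree.) Hence $\nabla(u_1-u_2)=0$ a.e. on the connected set $\Omega$, resp. $B$, while $u_1-u_2=0$ on $\partial\Omega$, so $u_1\equiv u_2$. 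For $A=\emptyset$ only the $B$-part is present and one recovers, when moreover $p\ge2$, the setting of \cite{corboesposito2021monotonicity}.

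The main obstacle is precisely the feature that the energy density has exponent $p$ on $B$ and exponent $q$ on $A$, so one cannot carry out lower semicontinuity of the whole energy in a single reflexive space with a single exponent. The remedy is to work in $W^{1,\min\{p,q\}}(\Omega)$ (or $W^{1,p}(B)$ for (R)) and to extract, in addition, weak $L^{\max\{p,q\}}$-compactness of the gradients restricted to the higher-exponent phase, together with the finite-energy competitor constant on $A$ that guarantees the infimum is finite. The remaining points — weak closedness of the PEC constraint $\{\nabla u=0\text{ in }A\}$ and the fact that it does not spoil coercivity, and the trace-matching that upgrades "equal gradients on $B$" to "equal functions on $\Omega$" in case (S) — are routine.
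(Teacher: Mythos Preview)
Your proposal is correct and follows the same route as the paper: coercivity from the lower bounds in (P2)/(Q1), strict convexity of $\xi\mapsto Q_\sigma(x,|\xi|)$ from (P3)/(Q2), and the direct method of the Calculus of Variations (the paper simply invokes \cite[Th.~3.30]{dacorogna2007direct} where you run the argument by hand). Your explicit finite-energy competitor constant on $A$ and the separate weak extraction in $L^p(B)$ and $L^q(A)$ make precise the two-exponent details that the paper leaves to that reference and to its subsequent Remark.
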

\begin{proof}
A simple integration of the l.h.s. of the assumption (P2) provides the coercitivity of $Q_B(x,\cdot)$ a.e. in $B$.

Furthermore, if we consider two parallel vectors $\bf E_1$ and $\bf E_2$ in (P3), we easily deduce that $\sigma_B(x,E_2){E}_2-\sigma_B(x,E_1){E}_1> 0$ for any $E_1,E_2>0$. This means that $E\in\ [0,+\infty)\mapsto \sigma_B(x, E) E$ is strictly increasing for a.e. $x \in B$, that is $E\in\ [0,+\infty)\mapsto Q_B(\cdot,E)$ is strictly convex a.e. in $B$.

When (Q1) holds, \eqref{min_max_estimates} implies that there exist $\underline Q>0$ such that
\[
Q_\sigma(x, E)\ge  \underline{Q}\left(\frac{E}{E_0}\right)^r\quad\text{for a.e. } x\in\Omega \text{ and } \forall E>0.
\]

Hence, the function $Q_\sigma(x,\cdot)$ is coercive a.e. in $\Omega$. 

The assumption (Q2) implies that $E\in\ [0,+\infty)\mapsto Q_A(\cdot, E)$ is strictly convex a.e. in $A$ and hence $E\in\ [0,+\infty)\mapsto Q_\sigma(\cdot,E)$ is strictly convex a.e. in $\Omega$.

Similarly, when (R) or (S) holds, the convexity of $Q_B$ follows from assumption (P3) whereas the coercitivity follows from the left hand side of assumption (P2).

In all four cases, standard direct methods of the calculus of variations \cite[Th. 3.30]{dacorogna2007direct} provide the existence and uniqueness of the solution.
\end{proof}
\begin{rem}
The condition $f\in X_\diamond^p(\partial\Omega)$ is a  necessary assumption because it guarantees the existence of a function in $f+W_{0}^{1,p}(\Omega)$ such that $\mathbb E_\sigma (f)<+\infty$ (see \cite[Th. 3.30]{dacorogna2007direct} for details). 
\end{rem}

\subsection{The DtN operator.} The Dirichlet-to-Neumann (DtN) operator maps the Dirichlet data into the corresponding Neumann data:
\begin{equation*}
\Lambda_\sigma   :f\in X^p_\diamond(\partial\Omega)\mapsto \sigma(x, |\nabla u^f|)\ 
\partial_nu^f|_{\partial\Omega} 
\in X^p_\diamond(\partial\Omega)',
\end{equation*}
where $X^p_\diamond(\partial\Omega)'$ is the dual space of $X^p_\diamond(\partial\Omega)$ and $u^f$ is the solution of \eqref{gproblem}. From the physical standpoint, the DtN operator maps the boundary electric scalar potential into the normal component of the electrical current density entering $\partial \Omega$ (see \ref{gOhm}).

In weak form, the DtN operator is
\begin{equation}
\label{w-DtN}
\langle \Lambda_\sigma  \left( f\right) ,\varphi\rangle
=\int_{\partial \Omega }\varphi (x) {\sigma}\left( x, \left\vert \nabla u^f(x)\right\vert\right)  \partial_{{{n}}} u^f(x)\text{d}S\quad\forall \varphi\in X^p_\diamond(\partial\Omega).
\end{equation}

Furthermore, by testing the DtN operator \eqref{w-DtN} with the solution $u^f$ of \eqref{gproblem} and using a divergence Theorem, we obtain the ohmic power dissipated by the conducting material:
\begin{equation*}
\langle \Lambda_\sigma  \left( f\right) ,f\rangle
=\int_{\Omega } {\sigma}( x ,\nabla u^f(x)){ |\nabla u^f}(x)|^2\ \text{d}x.
\end{equation*}
If $\varphi\neq f$ in \eqref{w-DtN}, we have the so-called
\emph{virtual power} product that plays an important role since it is equal to the G\^{a}teaux derivative of the Dirichlet Energy, when evaluated at the solution $u^f$, as shown in Section \ref{connection_sec}.

The injectivity of the DtN operator is guaranteed by the assumption of zero average of $f$.

\section{Convergence of the Scalar Potential w.r.t. the Boundary Data}\label{convergence_sec}
The aim of this section is to prove a fundamental convergence result of the solutions with respect to the boundary data. Specifically, it is possible to prove that the operator $f \mapsto \nabla u^f$ is continuous with respect to the boundary data $f$, along any arbitrary direction (see Proposition \ref{lpconv2}). This generalizes the result of \cite[Lem. 3.1]{corboesposito2021monotonicity}, which proved a convergence result for $p\ge 2$ and $q=p$. Here we generalize these previous results to arbitrary $p$ and $q$ with $1 < p,q < +\infty$.

Firstly, we prove the following Lemma that provides an upper bound to the norm of the solutions of problems \eqref{minimum_BA}, \eqref{minimum_PEI}, \eqref{minimum_PEC}, through the norm of the boundary data. The proof, based on well-established methods, is given explicitly because it refers to an atypical configuration.

\begin{lem}\label{X_trace_inequality_lem}
Let $1<p,q<+\infty$, $f\in X_\diamond^p(\partial\Omega)$, ${\sigma}$ satisfying (P1), (P2), (P3). Then, there exists $C=C(p,q, \Omega)$ such that
\begin{itemize}
\item[(i)] If $A=\emptyset$, then
\begin{equation}
\label{lemma_traccia_Omega}
\begin{split}
||\nabla u^f||_{L^p(\Omega)}\leq C(1+ ||f||_{X^p(\partial\Omega)}),
\end{split}
\end{equation}
where $u^{f}\in W^{1,p}(\Omega)$ is the minimizer of \eqref{minimum_BA}.
\item[(ii)] If ${\sigma}$ satisfies (QX), then
\begin{equation}
\label{lemma_traccia_AB}
\begin{split}
||\nabla u^f||_{L^p(B)},||\nabla u^f||^\frac pq_{L^q(A)}&\leq\left(||\nabla u^f||^p_{L^p(B)}+||\nabla u^f||^q_{L^q(A)}\right)^\frac 1p\\
&\leq C(1+ ||f||_{X^p(\partial\Omega)}),
\end{split}
\end{equation}
where $u^{f}\in W^{1,p}(\Omega)$ is the minimizer of \eqref{minimum_BA}.
 \item[(iii)] If ${\sigma}$ satisfies  (R), then
\begin{equation}
\label{lemma_traccia_PEI}
||\nabla u^f||_{L^p(B)}\leq C(1+ ||f||_{X^p(\partial\Omega)}),
\end{equation}
where $u^{f}\in W^{1,p}(\Omega)$ is the minimizer of \eqref{minimum_PEI}.
 \item[(iv)] If ${\sigma}$ satisfies (S), then
\begin{equation}\label{lemma_traccia_PEC}
||\nabla u^f||_{L^p(B)}\leq C(1+ ||f||_{X^p(\partial\Omega)}),
\end{equation}
where $u^{f}\in W^{1,p}(\Omega)$ is the minimizer of \eqref{minimum_PEC}.
\end{itemize}
\end{lem}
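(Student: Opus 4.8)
The plan is to use the minimality of $u^f$ against a fixed competitor, namely any $W^{1,p}(\Omega)$-extension $F$ of the boundary datum $f$ whose norm is controlled by $\|f\|_{X^p(\partial\Omega)}$; such an extension exists by the definition of the trace space (Th.~18.40 in \cite{leoni17}), and in fact we may take $\|F\|_{W^{1,p}(\Omega)}\le C_\Omega\|f\|_{X^p(\partial\Omega)}$. In all four cases the Dirichlet Energy $\mathbb E_\sigma$ of the actual minimizer does not exceed $\mathbb E_\sigma(F)$, and the right-hand side of (P2) (resp.\ the right-hand side of (Q1) integrated, in case (ii)) gives an upper bound $\mathbb E_\sigma(F)\le C(1+\|\nabla F\|_{L^p(\Omega)}^p+\|\nabla F\|_{L^q(A)}^q)$. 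The term $\|\nabla F\|_{L^q(A)}^q$ is only an issue when $q>p$; but $A\subset\subset\Omega$ with Lipschitz boundary is not enough by itself to embed $W^{1,p}(\Omega)$ into $W^{1,q}(A)$. This is the one genuine subtlety, and I return to it below.

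Granting the energy bound, the left-hand inequality in (P2), after integrating, yields the coercivity estimate $\underline Q\,\|\nabla u^f\|_{L^p(B)}^p\le \mathbb E_\sigma(u^f)+C$ (the additive constant accounts for the inhomogeneous lower bound when $1<p<2$, via Young's inequality applied to $\int_B (E/E_0)^{p-2}E^2$), and likewise $\underline Q\,\|\nabla u^f\|_{L^q(A)}^q\le \mathbb E_\sigma(u^f)+C$ in case (ii) using the left-hand side of (Q1). Combining with $\mathbb E_\sigma(u^f)\le \mathbb E_\sigma(F)\le C(1+\|f\|_{X^p(\partial\Omega)}^s)$ for $s=\max\{p,q\}$ and taking $p$-th (resp.\ appropriate) roots gives the claimed bounds; the $1+(\cdot)$ form on the right absorbs the passage from $\|f\|^s$ to $\|f\|$ once one notes $t\le 1+t^s$. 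Cases (iii) and (iv) are formally the same but easier: the energy only involves $Q_B$ over $B$, and for (iv) the competitor must additionally be constant on each $A_i$ — one takes $F$ to be the $W^{1,p}(\Omega)$-extension of $f$ and then replaces it on each $A_i$ by its boundary-mean constant, which does not increase $\|\nabla F\|_{L^p(B)}$ since $\nabla F$ on $B$ is unchanged.

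The main obstacle, as flagged, is case (ii) when $q>p$: controlling $\mathbb E_\sigma(F)$ requires a competitor that lies in $W^{1,q}(A)$, and a generic $W^{1,p}(\Omega)$ extension need not. The fix is to choose the competitor adapted to the geometry: since $A\subset\subset\Omega$, pick a cutoff $\chi\in C_c^\infty(\Omega)$ with $\chi\equiv 1$ on a neighborhood of $\overline A$ and $\chi\equiv 0$ near $\partial\Omega$ is the wrong direction — instead use $\chi\equiv 0$ on $\overline A$ and $\chi\equiv 1$ near $\partial\Omega$, and let the competitor be $F$ outside a collar and a smooth ($C^\infty$, hence $W^{1,q}$) function on $A$; more simply, take as competitor $w:=(1-\chi)\,c+\chi F$ where $c$ is a constant and $\chi\in C_c^\infty(\Omega\setminus\overline A\,;[0,1])$ equals $1$ near $\partial\Omega$ — then $w\equiv c$ on $A$ so $\nabla w\equiv 0$ there, $w=f$ on $\partial\Omega$, and $\|\nabla w\|_{L^p(\Omega)}\le C(1+\|f\|_{X^p(\partial\Omega)})$ by Leibniz, while $\|\nabla w\|_{L^q(A)}=0$. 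With this competitor the energy bound holds with the $L^q(A)$ term absent, and the rest of the argument proceeds verbatim. (The same competitor simultaneously handles case (iv), and degenerates correctly to the classical single-exponent argument of \cite{corboesposito2021monotonicity} when $A=\emptyset$, i.e.\ case (i).)
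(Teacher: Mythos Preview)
Your argument is correct in substance and the cutoff competitor $w=(1-\chi)c+\chi F$ is exactly the right object; two small fixes are needed. First, the notation $\chi\in C_c^\infty(\Omega\setminus\overline A)$ is inconsistent with $\chi\equiv 1$ near $\partial\Omega$, since compact support in $\Omega\setminus\overline A$ forces $\chi$ to vanish near $\partial\Omega$ as well; you simply want $\chi\in C^\infty(\overline\Omega;[0,1])$ with $\chi\equiv 0$ on a neighborhood of $\overline A$ and $\chi\equiv 1$ on a neighborhood of $\partial\Omega$, which exists because $A\subset\subset\Omega$. Second, your first suggestion for case (iv), ``replace $F$ on each $A_i$ by its boundary-mean constant'', does not produce a $W^{1,p}(\Omega)$ function in general (the traces from inside and outside $\partial A_i$ would disagree); but you immediately recover by noting that the cutoff competitor $w$ already does the job, so this is harmless. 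Also, the additive constant you insert in the coercivity for $1<p<2$ is unnecessary: integrating the left side of (P2) gives the clean homogeneous bound $Q_B(x,E)\ge \frac{\underline\sigma}{p}E_0^{2-p}E^p$.

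The paper's route differs mainly in case (ii): rather than building an explicit cutoff competitor, it uses as test function the solution $u_\infty^f$ of the PEC problem \eqref{minimum_PEC}, which by construction is constant on $A$ and so has $\int_A Q_A(x,|\nabla u_\infty^f|)=0$; the bound on $\|\nabla u_\infty^f\|_{L^p(B)}$ then comes from case (iv). For case (iii) the paper introduces an auxiliary minimizer $\hat u^f$ with energy density $Q_B$ on $B$ and $E^p$ on $A$. Your construction is more elementary and self-contained (no auxiliary PDE solutions), while the paper's has the aesthetic advantage of recycling an object already present in the framework; both deliver exactly the same estimate $\mathbb E_\sigma(u^f)\le C(1+\|f\|_{X^p(\partial\Omega)}^p)$ with the $L^q(A)$ contribution of the competitor identically zero.
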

\begin{proof}
We give the proof for each individual case.

$\bullet$ \textbf{Claim (i)}.
Let us assume $p \geq 2$. Since $u^f$ realizes the minimum in \eqref{minimum_BA} with $A=\emptyset$, by using (P2), we have
\begin{equation}
    \label{chain_p_en}
\underline c\int_\Omega |\nabla u^f(x)|^p \text{d}x\leq\int_\Omega Q_\sigma(x,|\nabla{u^f}(x)|)\text{d}x\le \overline c \int_\Omega\left(|\nabla u(x)|^2+ |\nabla u(x)|^p\right)\text{d}x\end{equation}
for any $u\in f+W^{1,p}_0(\Omega)$, and where the two constants $\overline c$ and $\underline c$ are defined as
\begin{equation}
\label{consts}
    \overline c =\frac{\overline{\sigma}}{pE_0^{p-2}}, \ \underline c =\frac{\underline{\sigma}}{pE_0^{p-2}}.
\end{equation}

The Inverse Trace inequality in Besov spaces \cite[Th. 18.34]{leoni17} assures the existence of a function $w\in f+ W^{1,p}_0(\Omega)$ such that $||\nabla w||_{p}\leq K(p,\Omega) ||f||_{X^p(\partial\Omega)}$.
Therefore, we have
\begin{equation*}
\begin{split}
||\nabla u^f||^p_{p}\leq C_1(||\nabla w||^2_{2}+||\nabla w||^p_{p})\le C_1(C_2||\nabla w||^2_{p}+||\nabla w||^p_{p})\\
\le C_1(C_2+(C_2+1)||\nabla w||^p_{p})\le C_1(C_2+(C_2+1) K^p_p||f||^p_{X^p(\partial\Omega)}).
\end{split}
\end{equation*}
where in the first inequality we have used \eqref{chain_p_en} with $u=w$ and $C_1=\overline c / \underline c$; in the second inequality that, for $p \ge 2$ there exists a constant $C$ such that $||\nabla w||_2\le C_2 ||\nabla w ||_p$ by the continuous embedding of the Sobolev spaces; in the third inequality that $||\nabla w||_p^2\le 1+ ||\nabla w ||_p^p$ and in the fourth inequality the inverse trace inequality. Therefore, \eqref{lemma_traccia_Omega} follows by setting $C=C_1\max\{C_2,(C_2+1)K_p^p\}$.

The inequality for $1<p<2$ analogously follows (in easier way) using the second line of the assumption (P2). 

$\bullet$ \textbf{Claim (ii)}.
We have
\[
\begin{split}
\underline c\left(\int_B |\nabla u^f(x)|^p dx+\int_A |\nabla u^f(x)|^q dx \right)&\leq \mathbb E_\sigma (u^f)\\ 
&\hspace{-5cm}\leq \mathbb E_\sigma (u_\infty^f)= \int_BQ_B(x,|\nabla{u^f_\infty}(x)|)\text{d}x\leq \overline c C\left(1+ ||f||^p_{X^p(\partial\Omega)}\right),
\end{split}
\]
where $\overline c$ and $\underline c$ are the constants defined in \eqref{consts} and $u_\infty^f$ is the solution of the PEC problem
\eqref{minimum_PEC}, the first inequality follows from assumption (P2)-left and (Q1)-left, the second inequality follows from using $u_\infty^f$ as the test function, and the third inequality follows by using the argument of the previous case.

$\bullet$ \textbf{Claim (iii)}. 
Regarding the case where (R) holds, that is the PEI case, let us consider the solution $\hat u^f$ of the problem  $\min_{\substack{u\in W^{1,p}(\Omega)\\ u=f\ \text{on}\ \partial \Omega}}\int_\Omega \hat Q(x,|\nabla{u}(x)|)\text{d}x$, where
\[
\hat Q(x,E)=
\begin{cases}
 Q_B(x,E)\ &\text{on}\ B,\\
 E^p\ &\text{on}\ A.
\end{cases}
\]
It is easy to see that there exist two positive constants $\underline c$ and $\overline c$ such that
\[
\begin{split}
\underline c\int_B |\nabla u^f(x)|^p dx & \leq \mathbb E_\sigma (u^f)\leq \mathbb E_\sigma (\hat u^f)\leq \overline c\left( \int_B |\nabla \hat u^f(x)|^p\text{d}x+\int_A |\nabla \hat u^f(x)|^p\text{d}x\right)\\
&\leq C ||f||_{X^p(\partial\Omega)}.
\end{split}
\]
where in the first inequality we have used the l.h.s. of (P2), in the second inequality we have $\hat u^f$ as the admissible test function, in the third inequality we have used the r.h.s of assumption (P2) and the last inequality follows as in Claim {\it (i)}.

$\bullet$ \textbf{Claim (iv)}. 
The conclusion for the PEC case follows, arguing as in Claim {\it (i)}.
\end{proof}

Lemma \ref{X_trace_inequality_lem} underpins the following fundamental convergence result. 
\begin{prop}
\label{lpconv2}
Let $1<p,q<+\infty$, $f\in X_\diamond^p(\partial\Omega)$ and ${\sigma}$ satisfying (P1), (P2), (P3).
\begin{itemize}
\item[(i)] If $A=\emptyset$, then \begin{equation}
    \label{fund_conv_Omega}
\begin{split}
& \nabla u^{f+\varepsilon \varphi}\to\nabla u^f\ \text{in} \ {L^p(\Omega)}\qquad\text{as}\ {\varepsilon\to 0^+},
\end{split}
\end{equation}
for any $\varphi\in W^{1,p}(\Omega)$, where $u^{f+\varepsilon \varphi} \in W^{1,p}(\Omega)$ (resp. $u^{f} \in W^{1,p}(\Omega)$) is the minimizer of \eqref{minimum_BA} with boundary data $f+\varepsilon \varphi$ (resp. $f$).

\item[(ii)] If ${\sigma}$ satisfies (Q1)-(Q2), then \begin{equation}
    \label{fund_conv_AB}
\begin{split}
& \nabla u^{f+\varepsilon \varphi}\to\nabla u^f\ \text{in} \ {L^p(B)}\qquad \text{as}\ {\varepsilon\to 0^+},\\
& \nabla u^{f+\varepsilon \varphi}\to\nabla u^f\ \text{in} \ {L^q(A)}\qquad \text{as}\ {\varepsilon\to 0^+},\\
& \nabla u^{f+\varepsilon \varphi}\to\nabla u^f\ \text{in} \ {L^r(\Omega)}\qquad\text{as}\ {\varepsilon\to 0^+},
\end{split}
\end{equation}
for any $\varphi\in (W^{1,p}(\Omega)\cup W^{1,q}(\Omega))\cap W^{1,p}(B)\cap W^{1,q}(A)$, where $r=\min\{p,q\}$ and $u^{f+\varepsilon \varphi} \in W^{1,p}(\Omega)$ (resp. $u^{f} \in W^{1,p}(\Omega)$) is the minimizer of \eqref{minimum_BA} with boundary data $f+\varepsilon \varphi$ (resp. $f$).
\item[(iii)] If ${\sigma}$ satisfies (R), then
\begin{equation}
    \label{fund_conv_PEI}
\nabla u^{f+\varepsilon \varphi}\to\nabla u^f\ \text{in} \ {L^p(B)}\qquad \text{as}\ {\varepsilon\to 0^+},
\end{equation}
where $u^{f+\varepsilon \varphi} \in W^{1,p}(\Omega)$ (resp. $u^{f} \in W^{1,p}(\Omega)$) is the minimizer of \eqref{minimum_PEI} with boundary data $f+\varepsilon \varphi$ (resp. $f$).
\item[(iv)] If ${\sigma}$ satisfies (S), then
\begin{equation}
    \label{fund_conv_PEC}
\nabla u^{f+\varepsilon \varphi}\to\nabla u^f\ \text{in} \ {L^p(B)}\qquad \text{as}\ {\varepsilon\to 0^+},
\end{equation}
where $u^{f+\varepsilon \varphi} \in W^{1,p}(\Omega)$ (resp. $u^{f} \in W^{1,p}(\Omega)$) is the minimizer of \eqref{minimum_PEC} with boundary data $f+\varepsilon \varphi$ (resp. $f$).
\end{itemize}
\end{prop}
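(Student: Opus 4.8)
\emph{Plan.} I would handle cases (i)--(iv) with a single scheme: the direct method to identify the weak limit of $u^{f+\varepsilon\varphi}$, then the strong monotonicity assumptions (P3), (Q2) to upgrade weak convergence of the gradients to strong convergence, indicating only the changes needed in the PEI/PEC cases, where the admissible class and the test-function space are those attached to \eqref{problem_PEI} and \eqref{problem_PEC}. Abbreviate $u_\varepsilon:=u^{f+\varepsilon\varphi}$ and $u:=u^f$. For $\varepsilon\in(0,1]$ the norms $\|f+\varepsilon\varphi\|_{X^p(\partial\Omega)}$ stay bounded, so Lemma \ref{X_trace_inequality_lem} gives a bound on $\|\nabla u_\varepsilon\|_{L^p(\Omega)}$ (resp. on $\|\nabla u_\varepsilon\|_{L^p(B)}$ and $\|\nabla u_\varepsilon\|_{L^q(A)}$ in case (ii)) independent of $\varepsilon$; together with the zero-mean normalisation and Poincar\'e's inequality, $\{u_\varepsilon\}$ is bounded in $W^{1,p}$, so along a subsequence $u_\varepsilon\rightharpoonup u_*$ weakly in $W^{1,p}$, and in case (ii) also $\nabla u_\varepsilon|_A\rightharpoonup\nabla u_*|_A$ weakly in $L^q(A)$.

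\emph{Identification of the limit.} Since $u_\varepsilon-f-\varepsilon\varphi$ lies in the relevant $W^{1,p}_0$-type space and $\varepsilon\varphi\to0$, the limit $u_*$ is admissible for the unperturbed problem. Using $u+\varepsilon\varphi$ as a competitor gives $\mathbb E_\sigma(u_\varepsilon)\le\mathbb E_\sigma(u+\varepsilon\varphi)$, and by the two-sided growth bound \eqref{min_max_estimates} and dominated convergence $\mathbb E_\sigma(u+\varepsilon\varphi)\to\mathbb E_\sigma(u)$. Because $\xi\mapsto Q_\sigma(x,|\xi|)$ is convex (Remark \ref{rem_ipotesi_lin}), $\mathbb E_\sigma$ is weakly lower semicontinuous, so $\mathbb E_\sigma(u_*)\le\liminf_\varepsilon\mathbb E_\sigma(u_\varepsilon)\le\mathbb E_\sigma(u)$; uniqueness of the minimiser forces $u_*=u$, and since the limit is independent of the subsequence, $u_\varepsilon\rightharpoonup u$ as $\varepsilon\to0^+$.

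\emph{Weak $\Rightarrow$ strong — the main point.} Set $I_\varepsilon:=\int_\Omega(\sigma(x,|\nabla u_\varepsilon|)\nabla u_\varepsilon-\sigma(x,|\nabla u|)\nabla u)\cdot(\nabla u_\varepsilon-\nabla u)\,dx$ (and the analogous integrals over $B$ and over $A$ in case (ii)). I would first show $I_\varepsilon\to0$: write $\nabla u_\varepsilon-\nabla u=\nabla\big[(u_\varepsilon-f-\varepsilon\varphi)-(u-f)\big]+\varepsilon\nabla\varphi$; the bracketed term is an admissible test function for the weak Euler--Lagrange equation of $u_\varepsilon$, so the integral of $\sigma(x,|\nabla u_\varepsilon|)\nabla u_\varepsilon$ against it vanishes, leaving $\int\sigma(x,|\nabla u_\varepsilon|)\nabla u_\varepsilon\cdot(\nabla u_\varepsilon-\nabla u)=\varepsilon\int\sigma(x,|\nabla u_\varepsilon|)\nabla u_\varepsilon\cdot\nabla\varphi=O(\varepsilon)$, since $\sigma(x,|\nabla u_\varepsilon|)\nabla u_\varepsilon$ is bounded in $L^{p'}$ (resp. $L^{q'}(A)$) by \eqref{min_max_estimates} and the uniform gradient bound; while $\int\sigma(x,|\nabla u|)\nabla u\cdot(\nabla u_\varepsilon-\nabla u)\to0$ because $\sigma(x,|\nabla u|)\nabla u\in L^{p'}$ and $\nabla u_\varepsilon-\nabla u\rightharpoonup0$. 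Then I invoke (P3)/(Q2): for exponent $\ge2$ one has $I_\varepsilon\ge\kappa\|\nabla u_\varepsilon-\nabla u\|_{L^p}^p$ at once; for exponent in $(1,2)$, $I_\varepsilon$ controls $\int(1+|\nabla u_\varepsilon|^2+|\nabla u|^2)^{\frac{p-2}2}|\nabla u_\varepsilon-\nabla u|^2$, and a H\"older split with exponents $2/p$ and $2/(2-p)$, together with the uniform $L^p$ gradient bound, yields $\|\nabla u_\varepsilon-\nabla u\|_{L^p}^p\le C\,I_\varepsilon^{p/2}$. Either way $\nabla u_\varepsilon\to\nabla u$ strongly; in case (ii), running this on $B$ with exponent $p$ and on $A$ with exponent $q$ gives the first two assertions, and the $L^r(\Omega)$ statement with $r=\min\{p,q\}$ follows through the embeddings $L^q(A)\hookrightarrow L^r(A)$ and $L^p(B)\hookrightarrow L^r(B)$.

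\emph{Obstacles and the PEI/PEC cases.} The substantive step is the previous one; everything else is soft. In the PEC case the competitor $u+\varepsilon\varphi$ need not satisfy $|\nabla u|=0$ in $A$, so I would instead take $u+\varepsilon w$ with $w\in W^{1,p}(\Omega)$ equal to $\varphi$ on $\partial\Omega$ and vanishing on a neighbourhood of $A$ (then $\nabla(u+\varepsilon w)\to\nabla u$, and $u_\varepsilon-u-\varepsilon w$, constant on each $A_i$ and zero on $\partial\Omega$, is an admissible test function); in the PEI case only $B$ enters and $\varphi|_B\in W^{1,p}(B)$ suffices. One must also verify weak lower semicontinuity of the split energy in case (ii), which follows from (P1), (P3), (Q2) and a standard semicontinuity theorem. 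The only place requiring genuine computational care is the $1<p<2$ H\"older bookkeeping in the weak-to-strong step.
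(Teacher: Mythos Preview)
Your proof is correct, but it takes a slightly longer route than the paper's. The paper also introduces
\[
I-II=\int_\Omega\big(\sigma(x,|\nabla u_\varepsilon|)\nabla u_\varepsilon-\sigma(x,|\nabla u|)\nabla u\big)\cdot(\nabla u_\varepsilon-\nabla u)\,dx,
\]
but observes that the Euler--Lagrange equation can be used for \emph{both} solutions: since $u_\varepsilon-u$ has trace $\varepsilon\varphi$, testing the equation for $u_\varepsilon$ gives $I=\varepsilon\int\sigma(x,|\nabla u_\varepsilon|)\nabla u_\varepsilon\cdot\nabla\varphi$, and testing the equation for $u$ gives $II=\varepsilon\int\sigma(x,|\nabla u|)\nabla u\cdot\nabla\varphi$. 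Combined with (P2)/(Q1), H\"older, and Lemma~\ref{X_trace_inequality_lem}, this yields the quantitative bound $I-II\le F\varepsilon$ directly, with no need to first establish weak convergence or invoke weak lower semicontinuity of $\mathbb E_\sigma$. The (P3)/(Q2) lower bounds and the $1<p<2$ H\"older split are then applied exactly as you do.

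Your version uses the Euler--Lagrange equation only for $u_\varepsilon$ and handles the second term via $\nabla u_\varepsilon\rightharpoonup\nabla u$, which in turn forces you to run the identification-of-the-limit argument (boundedness, semicontinuity of the split energy, uniqueness). This is a perfectly standard weak-to-strong template and it works here, but it is strictly more machinery than needed and yields only $I_\varepsilon\to 0$ rather than an explicit rate. In case (ii) it also requires you to justify semicontinuity of $\mathbb E_\sigma$ with respect to simultaneous weak convergence in $L^p(B)\times L^q(A)$, which you flag but do not carry out; the paper sidesteps this entirely. In short: same monotonicity estimate, but the paper's symmetric use of both Euler--Lagrange equations makes the whole weak-convergence detour unnecessary.
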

\begin{proof}
For any fixed $0<\varepsilon<1$, we denote by $I$ and $II$ the following quantities:
\begin{align}\label{Idiff}
I:&=\int_\Omega  {\sigma}\left( x, \left\vert \nabla u^{f+\varepsilon \varphi}(x)\right\vert\right)  {\nabla u^{f+\varepsilon \varphi}(x)}\cdot(  \nabla u^{f+\varepsilon \varphi}(x)- \nabla u^{f}(x))\text{d}x,\\
\label{IIdiff}
II:&=\int_\Omega  {\sigma}\left( x, \left\vert \nabla u^{f}(x)\right\vert\right)  {\nabla u^{f}(x)}\cdot(  \nabla u^{f+\varepsilon \varphi}(x)- \nabla u^{f}(x))\text{d}x.
\end{align}

$\bullet$ {\bf Claim (i)}. We distinguish the proof according to the value of the exponent $p$.

For $p\geq 2$, 
the divergence Theorem gives
\begin{align}
\label{Ibordo}
I&=\varepsilon\int_\Omega {\sigma}\left( x, \left\vert \nabla u^{f+\varepsilon \varphi}(x)\right\vert\right) \nabla u^{f+\varepsilon \varphi}(x)\cdot \nabla \varphi(x)\text{d}x\\
\label{IIbordo}
II&=\varepsilon\int_{ \Omega}  {\sigma}\left( x, \left\vert \nabla u^{f}(x)\right\vert\right)  \nabla u^{f}(x)\cdot \nabla \varphi(x)\text{d}x.
\end{align}
By subtracting \eqref{IIbordo} from \eqref{Ibordo}, we have
\begin{equation*}
\begin{split}
I-II & = \varepsilon\int_{\Omega} \nabla \varphi(x)\left[  {\sigma}\left( x, \left\vert \nabla u^{f+\varepsilon \varphi}(x)\right\vert\right)  \nabla u^{f+\varepsilon \varphi}(x)- {\sigma}\left( x, \left\vert \nabla u^{f}(x)\right\vert\right)  \nabla u^{f}(x)\right]\ \text{d}x\\
& \leq\varepsilon D_1\int_{\Omega}| \nabla \varphi(x)| \left[ \left\vert \nabla u^{f+\varepsilon \varphi}(x)\right\vert +\left\vert \nabla u^{f+\varepsilon \varphi}(x)\right\vert^{p-1} + \left\vert \nabla u^{f}(x)\right|+\left\vert \nabla u^{f}(x)\right\vert^{p-1} \right]\ \text{d}x,\\
\end{split}
\end{equation*}
where in the second line we have used (P2)-right with $D_1=\overline\sigma\max\left\{1, {E_0^{2-p}}\right\}$. Then, we have
\begin{equation}\label{maggiorazione_Omega}
\begin{split}
I-II &\leq \varepsilon D_1  \left[\left|\left|\nabla \varphi\right|\right|_{L^2(\Omega)} \left\vert\left\vert \nabla u^{f+\varepsilon \varphi}\right\vert\right\vert_{L^2(\Omega)}+\left|\left|\nabla \varphi\right|\right|_{L^p(\Omega)} \left\vert\left\vert \nabla u^{f+\varepsilon \varphi}\right\vert\right\vert_{L^p(\Omega)}^{p-1} \right.\\
&\qquad\qquad\qquad\qquad\quad\left. +\left|\left|\nabla \varphi\right|\right|_{L^2(\Omega)} \left\vert\left\vert \nabla u^{f}\right\vert\right\vert_{L^2(\Omega)}+\left|\left|\nabla \varphi\right|\right|_{L^p(\Omega)} \left\vert\left\vert \nabla u^{f}\right\vert\right\vert_{L^p(\Omega)}^{p-1}\right]\\
&\leq \varepsilon CD_1\left[\left|\left|\nabla \varphi\right|\right|_{L^2(\Omega)} (1+\left\vert\left\vert  f+\varepsilon \varphi\right\vert\right\vert_{X^2(\partial\Omega)})+ \left|\left|\nabla \varphi\right|\right|_{L^p(\Omega)}(1+||f+\varepsilon \varphi||_{X^p(\partial\Omega)} )^{p-1}\right.\\
&\qquad\qquad\qquad\qquad\quad\left.+\left|\left|\nabla \varphi\right|\right|_{L^2(\Omega)}(1+ \left\vert\left\vert f\right\vert\right\vert_{X^2(\partial\Omega)})+ \left|\left|\nabla \varphi \right|\right|_{L^p(\Omega)}(1+||f||_{X^p(\partial\Omega)})^{p-1}  \right]\\
&\leq \varepsilon CD_1\bigg[\left|\left|\nabla \varphi\right|\right|_{L^2(\Omega)} \left(1+ \left\vert\left\vert f\right|\right|_{X^2(\partial\Omega)}+ \left\vert\left\vert \varphi\right\vert\right\vert_{X^2(\partial\Omega)}\right)\\
&\qquad\qquad\qquad\qquad\quad+\left. \left|\left|\nabla \varphi\right|\right|_{L^p(\Omega)} \left(1+||f||_{X^p(\partial\Omega)}+||\varphi||_{X^p(\partial\Omega)}\right)^{p-1} \right],
\end{split}
\end{equation}
where in the first inequality we used the H\"older inequality, in the second inequality we used \eqref{lemma_traccia_Omega} from Lemma \ref{X_trace_inequality_lem}, in the third inequality we used the elementary properties of the norm and the fact that $\varepsilon<1$.

The quantity in the squared bracket of the last term in \eqref{maggiorazione_Omega} is finite because $f,\varphi\in X^p_\diamond(\partial\Omega)$. Hence, $I-II$ is upper bounded:
\begin{equation}
    \label{uppconv_Omega}
    I-II\leq  \varepsilon F,
\end{equation}
where $F$ is the product of $C$, $D_1$ and the quantity in the squared brackets on the r.h.s. of \eqref{maggiorazione_Omega}.

On the other hand, by using (P3), we have:
\begin{equation}
\label{lowconv_Omega}
||\nabla u^{f+\varepsilon  g }-\nabla u^f||^p_{L^p(\Omega)}\leq \frac{1}{\kappa}(I-II).
\end{equation}

By joining \eqref{uppconv_Omega} and \eqref{lowconv_Omega}, we have
\begin{equation*}
||\nabla u^{f+\varepsilon  g }-\nabla u^f||^p_{L^p(\Omega)}\leq \frac 1 \kappa( I-II)\leq \frac{F}{\kappa}\varepsilon.
\end{equation*} 
This gives \eqref{fund_conv_Omega} by passing the to limit as $\varepsilon\to 0^+$.

If $1<p<2$, the inequality \eqref{uppconv_Omega} follows analogously but with slightly different constants. 

Regarding the lower bound, we have
\begin{equation}\label{lowconv_Omega<2}
\begin{split}
&||\nabla u^{f+\varepsilon  \varphi}-\nabla u^f||^p_{p}\\
&\le  \frac{1}{\kappa}\int_\Omega (1+ |\nabla u^{f+\varepsilon  g}(x)|^2+|\nabla u^f(x)|^2)^\frac{2-p}4\\
&\qquad\qquad ((\sigma(x,|\nabla u^{f+\varepsilon  \varphi}(x)|)-\sigma(x,|\nabla u^{f}(x)|))(\nabla u^{f+\varepsilon  \varphi}(x)-\nabla u^{f}(x)))^\frac p2 \text{d}x\\
&\leq \frac{1}{\kappa}\left(\int_\Omega (1+ |\nabla u^{f+\varepsilon  \varphi}(x)|^2+|\nabla u^f(x)|^2)^\frac 12\text{d}x\right)^\frac{2-p}2\\
&\qquad\qquad\left(\int_\Omega(\sigma(x,|\nabla u^{f+\varepsilon  \varphi}(x)|)-\sigma(x,|\nabla u^{f}(x)|))(\nabla u^{f+\varepsilon  \varphi}(x)-\nabla u^{f}(x)) \text{d}x\right)^\frac p2\\
&= \frac{G}{\kappa} (I-II)^\frac p2,
\end{split}
\end{equation}
where in the first inequality we used the assumption (P3) raised to the power $p/2$, in the second inequality we used the H\"older inequality with exponents $2/p$ and $2/(2-p)$, and $G=\left(\int_\Omega (1+ |\nabla u^{f+\varepsilon  \varphi}(x)|^2+|\nabla u^f(x)|^2)^\frac 12\text{d}x\right)^\frac{2-p}2$ is a finite quantity.

By taking into account \eqref{uppconv_Omega} and \eqref{lowconv_Omega<2}, we have
\begin{equation*}
||\nabla u^{f+\varepsilon  g }-\nabla u^f||^p_{L^p(\Omega)}\leq \frac{G}{\kappa}( I-II)^\frac p2 \leq \frac{G F^\frac p2}{\kappa}\varepsilon^\frac p2,
\end{equation*}
which gives \eqref{fund_conv_Omega} by passing the to limit as $\varepsilon\to 0^+$.

$\bullet$ {\bf Claim (ii)}. First, we notice that the divergence Theorem gives \eqref{Ibordo} and \eqref{IIbordo}, also in this case.
For $p,q\ge 2$, we have
\begin{equation*}
\begin{split}
I-II & = \varepsilon\int_{\Omega} \nabla \varphi(x)\left[  {\sigma}\left( x, \left\vert \nabla u^{f+\varepsilon \varphi}(x)\right\vert\right)  \nabla u^{f+\varepsilon \varphi}(x)- {\sigma}\left( x, \left\vert \nabla u^{f}(x)\right\vert\right)  \nabla u^{f}(x)\right]\ \text{d}x\\
& \leq\varepsilon D_1\int_{B}| \nabla \varphi(x)| \left[ \left\vert \nabla u^{f+\varepsilon \varphi}(x)\right\vert +\left\vert \nabla u^{f+\varepsilon \varphi}(x)\right\vert^{p-1} + \left\vert \nabla u^{f}(x)\right|+\left\vert \nabla u^{f}(x)\right\vert^{p-1} \right]\ \text{d}x\\
&\ \  +\varepsilon D_1\int_{A}| \nabla \varphi(x)| \left[ \left\vert \nabla u^{f+\varepsilon \varphi}(x)\right\vert +\left\vert \nabla u^{f+\varepsilon \varphi}(x)\right\vert^{q-1} + \left\vert \nabla u^{f}(x)\right|+\left\vert \nabla u^{f}(x)\right\vert^{q-1} \right]\ \text{d}x,
\end{split}
\end{equation*}
where in the first inequality we used (P2) and (Q1).
Then, we have
\begin{equation}\label{maggiorazione_AB}
\begin{split}
I-II &\leq \varepsilon D_1  \left[\left|\left|\nabla \varphi\right|\right|_{L^2(B)} \left\vert\left\vert \nabla u^{f+\varepsilon \varphi}\right\vert\right\vert_{L^2(B)}+\left|\left|\nabla \varphi\right|\right|_{L^p(B)} \left\vert\left\vert \nabla u^{f+\varepsilon \varphi}\right\vert\right\vert_{L^p(B)}^{p-1} \right.\\
& \qquad\qquad +\left|\left|\nabla \varphi\right|\right|_{L^2(B)} \left\vert\left\vert \nabla u^{f}\right\vert\right\vert_{L^2(B)}+\left|\left|\nabla \varphi\right|\right|_{L^p(B)} \left\vert\left\vert \nabla u^{f}\right\vert\right\vert_{L^p(B)}^{p-1}\\
&\qquad\qquad\qquad  +\left|\left|\nabla \varphi\right|\right|_{L^2(A)} \left\vert\left\vert \nabla u^{f+\varepsilon \varphi}\right\vert\right\vert_{L^2(A)}+\left|\left|\nabla \varphi\right|\right|_{L^q(A)}\left\vert\left\vert \nabla u^{f+\varepsilon \varphi}\right\vert\right\vert_{L^q(A)}^{q-1}\\
& \qquad\qquad\qquad\qquad\ +\left. \left|\left|\nabla \varphi \right|\right|_{L^2(A)} \left\vert\left\vert \nabla u^{f}\right\vert\right\vert_{L^2(A)} + \left|\left|\nabla \varphi \right|\right|_{L^q(A)} \left\vert\left\vert \nabla u^{f}\right\vert\right\vert_{L^q(A)}^{q-1}\right]\\
&\leq \varepsilon C D_1\left[\left|\left|\nabla \varphi\right|\right|_{L^2(B)} \left\vert\left\vert  f+\varepsilon \varphi\right\vert\right\vert_{X^2(\partial\Omega)}+ \left|\left|\nabla \varphi\right|\right|_{L^p(B)}||f+\varepsilon \varphi||^{p-1}_{X^p(\partial\Omega)} \right.\\
&\qquad\qquad+\left|\left|\nabla \varphi\right|\right|_{L^2(B)} \left\vert\left\vert f\right\vert\right\vert_{X^2(\partial\Omega)}+ \left|\left|\nabla \varphi \right|\right|_{L^p(B)}||f||^{p-1}_{X^p(\partial\Omega)} \\
&\qquad\qquad+ \left|\left|\nabla \varphi\right|\right|_{L^2(A)} \left\vert\left\vert  f+\varepsilon \varphi\right\vert\right\vert_{X^2(\partial\Omega)}^\frac p2+ \left|\left|\nabla \varphi\right|\right|_{L^q(A)}||f+\varepsilon \varphi||^{\frac{p(q-1)}{q}}_{X^p(\partial\Omega)} \\
&\qquad\qquad\left.+\left|\left|\nabla \varphi\right|\right|_{L^2(A)} \left\vert\left\vert f\right\vert\right\vert_{X^2(\partial\Omega)}^\frac p2+ \left|\left|\nabla \varphi \right|\right|_{L^q(A)}||f||^{p-1}_{X^p(\partial\Omega)}+||f||^{\frac {p(q-1)}q}_{X^p(\partial\Omega)} \right]\\
&\leq \varepsilon C D_1\bigg[\left|\left|\nabla \varphi\right|\right|_{L^2(\Omega)} \left(\left\vert\left\vert f\right|\right|_{X^2(\partial\Omega)}+ \left\vert\left\vert \varphi\right\vert\right\vert_{X^2(\partial\Omega)}\right)\\
&\qquad\qquad+\left. \left|\left|\nabla \varphi\right|\right|_{L^p(\Omega)} \left(||f||^{p-1}_{X^p(\partial\Omega)}+||\varphi||^{p-1}_{X^p(\partial\Omega)}+ ||f||^{\frac {p(q-1)}q}_{X^p(\partial\Omega)}+||\varphi||^{\frac {p(q-1)}q}_{X^p(\partial\Omega)}\right) \right],
\end{split}
\end{equation}
where in the first inequality we used the H\"older inequality, in the second inequality we used \eqref{lemma_traccia_AB} from Lemma \ref{X_trace_inequality_lem}, in the third inequality we used the elementary properties of the norm and the fact that $\varepsilon<1$.

The quantity in the squared bracket of the last term in \eqref{maggiorazione_AB} is finite because $f,\varphi\in X^p_\diamond(\partial\Omega)$. Hence, the quantity $I-II$ is upper bounded:
\begin{equation}
    \label{uppconv_AB}
    I-II\leq F \varepsilon,
\end{equation}
where $F$ is the product of $C_1$, $D$ and the last term in the squared parenthesis of \eqref{maggiorazione_AB}.

In the remaining cases ($1<p<2$ and/or $1<q<2$), the inequality \eqref{uppconv_AB} follows analogously, but with slightly different constants. 

The lower bound for $I-II$ depends on the values of $p$ and $q$. Specifically, for $p\ge 2$, by using (P3), we have
\begin{equation}\label{lowconv_AB_p>2}
||\nabla u^{f+\varepsilon  g }-\nabla u^f||^p_{L^p(B)}\leq \frac 1 \kappa ( I-II),
\end{equation}
whereas for $1<p<2$, we have
\begin{equation}\label{lowconv_AB_p<2}
\begin{split}
&||\nabla u^{f+\varepsilon  \varphi}-\nabla u^f||^p_{L^p(B)}\\
&\le  \frac 1 {\kappa}\int_B [1+ |\nabla u^{f+\varepsilon  g}(x)|^2+|\nabla u^f(x)|^2]^\frac{2-p}4\\
&\qquad\qquad ((\sigma(x,|\nabla u^{f+\varepsilon  \varphi}(x)|)-\sigma(x,|\nabla u^{f}(x)|))(\nabla u^{f+\varepsilon  \varphi}(x)-\nabla u^{f}(x)))^\frac p2 \text{d}x\\
&\leq \frac 1 {\kappa}\left( \int_B (1+ |\nabla u^{f+\varepsilon  \varphi}(x)|^2+|\nabla u^f(x)|^2)^\frac 12\text{d}x\right)^\frac{2-p}2\\
&\qquad\qquad\left(\int_B(\sigma(x,|\nabla u^{f+\varepsilon  \varphi}(x)|)-\sigma(x,|\nabla u^{f}(x)|))(\nabla u^{f+\varepsilon  \varphi}(x)-\nabla u^{f}(x)) \text{d}x\right)^\frac p2\\
&\leq \frac G {\kappa} (I-II)^\frac p2,
\end{split}
\end{equation}
where in the first inequality we used the assumption (P3) raised to the power $p/2$, in the second inequality we used the H\"older inequality with exponents $2/p$ and $2/(2-p)$.

Regarding the inner convergence result, for $q\ge 2$ and by using (Q2), we have
\begin{equation}\label{lowconv_AB_q>2}
||\nabla u^{f+\varepsilon  g }-\nabla u^f||^q_{L^q(A)}\leq \frac 1 \kappa ( I-II),
\end{equation}
whereas for $1<q<2$ we have 
\begin{equation}\label{lowconv_AB_q<2}
||\nabla u^{f+\varepsilon  \varphi}-\nabla u^f||^q_{L^q(A)}\leq\frac G {\kappa} (I-II)^\frac q2,
\end{equation}
as follows by using (Q2) instead of (P3) and by substituting $q$ with $p$ and $A$ with $B$ in \eqref{lowconv_AB_p<2}.

By combining \eqref{uppconv_AB} with either \eqref{lowconv_AB_p>2} or \eqref{lowconv_AB_p<2}, and by letting $\varepsilon\to 0^+$, we have the outer convergence result of \eqref{fund_conv_AB}. Similarly, by combining \eqref{uppconv_AB} with either  \eqref{lowconv_AB_q>2} or \eqref{lowconv_AB_q<2}, and by letting $\varepsilon\to 0^+$, we have the inner convergence result of \eqref{fund_conv_AB}.

Finally, the convergence result over the whole domain $\Omega$ (see \eqref{fund_conv_AB}) follows by observing that $L^r(\Omega)= L^p(\Omega)\cup L^q(\Omega)$.

$\bullet$ {\bf Claim (iii)}. From the divergence theorem, and taking into account that in the PEI case the solutions have vanishing normal derivative on $\partial A$ (see \eqref{problem_PEI}), we have

\begin{align*}
I&=\varepsilon\int_B {\sigma}\left( x, \left\vert \nabla u^{f+\varepsilon \varphi}(x)\right\vert\right) \nabla u^{f+\varepsilon \varphi}(x)\cdot \nabla \varphi(x)\text{d}x\\
II&=\varepsilon\int_{B}  {\sigma}\left( x, \left\vert \nabla u^{f}(x)\right\vert\right)  \nabla u^{f}(x)\cdot \nabla \varphi(x)\text{d}x.
\end{align*}
The conclusion \eqref{fund_conv_PEI} follows by arguing as in the previous case, but using \eqref{lemma_traccia_PEI} from Lemma \ref{X_trace_inequality_lem}.

$\bullet$ {\bf Claim (iv)}. 
First, we have
\[
\begin{split}
I&=\varepsilon\int_B {\sigma}\left( x, \left\vert \nabla u^{f+\varepsilon \varphi}(x)\right\vert\right) \nabla u^{f+\varepsilon \varphi}(x)\cdot \nabla \varphi(x)\text{d}x\\
II&=\varepsilon\int_{B}  {\sigma}\left( x, \left\vert \nabla u^{f}(x)\right\vert\right)  \nabla u^{f}(x)\cdot \nabla \varphi(x)\text{d}x.
\end{split}
\]
Indeed, from the definition of $I$ appearing in \eqref{Idiff}, it follows that
\[
\begin{split}
I
&=\int_B \dive(\sigma_B(x, |\nabla u^{f+\varepsilon\varphi}|)\nabla u^{f+\varepsilon\varphi}(x)\ (u^{f+\varepsilon \varphi}(x)-u^{f}(x)))dx \\
&=\varepsilon \int_{\partial\Omega}\sigma_B(x, |\nabla u^{f+\varepsilon\varphi}(x)|)\partial_\nu u^{f+\varepsilon\varphi}(x)\ \varphi(x)|_\Omega dS\\
&\qquad + \sum_{i=1}^M (c^i_\varepsilon-c^i_0)\int_{\partial A}\sigma_B(x, |\nabla u^{f+\varepsilon\varphi}(x)|)\partial_\nu u^{f+\varepsilon\varphi}(x)\  dS\\
&=\varepsilon \int_B\dive(\sigma_B(x, |\nabla u^{f+\varepsilon\varphi}(x)|)\nabla u^{f+\varepsilon\varphi}(x)\ \varphi(x))dx\\
&=\varepsilon \int_B\sigma_B (x, |\nabla u^{f+\varepsilon\varphi}(x) |)\nabla u^{f+\varepsilon\varphi}(x)\cdot \nabla  \varphi(x)dx,
\end{split}
\]
where in the first equality we exploited that $\dive(\sigma_B(x, |\nabla u^{f+\varepsilon\varphi}|)\nabla u^{f+\varepsilon\varphi})=0$, since $u^{f+\varepsilon\varphi}$ is a solution of \eqref{problem_PEC}; in the second equality we applied the divergence Theorem, since $c^i_{\varepsilon}$ (resp. $c^i_0$) the constant value of $u^{f+\varepsilon\varphi}$ (resp. $u^{f}$) on $\partial A_i$, $i=1,..,M$; in the third equality we applied the divergence Theorem, together with the third constraint of \eqref{problem_PEC} but written for $u^{f+\varepsilon\varphi}$ and, in the fourth 
equality, we exploited the fact that $\dive(\sigma_B(x, |\nabla u^{f+\varepsilon\varphi}|)\nabla u^{f+\varepsilon\varphi}(x)=0$. Similar considerations can be applied to term $II$.

Conclusion  \eqref{fund_conv_PEC} follows by arguing as for Claim (i), but using  \eqref{lemma_traccia_PEC} from Lemma \ref{X_trace_inequality_lem}.
\end{proof}

\begin{rem}\label{rem_conv_J}
The continuity of $\sigma$ with respect to $E$ (assumption (P1)), together with the assumptions of Proposition \ref{lpconv2}, implies the following convergence result for the electrical current density $\bf J$:
    \[
    {\bf J}(x,|\nabla u^{f+\varepsilon \varphi}(x)|)\to {\bf J}(x,|\nabla u^f(x)|)\qquad\text{as}\ {\varepsilon\to 0^+},\qquad\forall x\in \Omega.
    \]
This convergence result holds in the same Sobolev spaces required in each of cases (i), (ii), (iii) and (iv) of Proposition \ref{lpconv2}.
\end{rem}

\section{The connection between the Dirichlet Energy and the DtN} \label{connection_sec}
The main aim of this Section is to prove that, in each of the four cases of nonlinearity we are interested in, there is a connection between the Dirichlet Energy and the DtN operator.

The DtN operator is of paramount relevance in inverse problems, because it can be measured from boundary data, unlike the Dirichlet Energy which is an integral quantity that requires the knowledge of the solution within the domain $\Omega$. On the other hand, the Dirichlet Energy is a relevant quantity because it satisfies a Monotonicity Principle, as shown in Section \ref{monoten}.

The intimate connection between the Dirichlet Energy and the DtN operator is given by the the G\^ateaux derivative. Specifically, it turns out that the G\^ateaux derivative of the mapping $f \mapsto \mathbb{E}_\sigma \circ \mathbb{U}_\sigma$ is equal to $\Lambda_\sigma$, where $\mathbb{U}_\sigma$ is the operator mapping the boundary data $f$ into the corresponding solution $u^f$:
\begin{equation*}
\mathbb{U}_\sigma:f\in X^p_\diamond(\partial\Omega)\mapsto u^f\in W,
\end{equation*}
where $W$ is a proper functional space (see Proposition \ref{gateauxprop}). In line with \cite{corboesposito2021monotonicity}, it is possible to prove the following Proposition.
\begin{prop} \label{gateauxprop} Let $1<p,q<+\infty$, $f\in X_\diamond^p(\partial\Omega)$ and ${\sigma}$ satisfying (P1), (P2), (P3). Then 
\[
\text{\emph d} (\mathbb E_\sigma\circ\mathbb{U_\sigma})(f)=\Lambda_\sigma(f).
\]
\end{prop}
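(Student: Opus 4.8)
The plan is to show that, for every admissible direction $\varphi\in X^p_\diamond(\partial\Omega)$ (identified, with the usual abuse of notation, with a fixed representative in the Sobolev class in which the solutions live, so that $\nabla\varphi$ makes sense on $\Omega$ and on $B$, $A$ as required by Proposition \ref{lpconv2}), the one-sided limit
\[
\lim_{\varepsilon\to0^+}\frac{\mathbb E_\sigma\big(u^{f+\varepsilon\varphi}\big)-\mathbb E_\sigma\big(u^f\big)}{\varepsilon}
\]
exists and equals $\int_\Omega\sigma(x,|\nabla u^f|)\,\nabla u^f\cdot\nabla\varphi\,dx=\langle\Lambda_\sigma(f),\varphi\rangle$; since this value is linear in $\varphi$ and, replacing $\varphi$ by $-\varphi$, the left one-sided limit yields the same expression, the Gâteaux derivative exists and coincides with $\Lambda_\sigma(f)$. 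The starting point is a sandwich produced by minimality: $u^f+\varepsilon\varphi$ is admissible for the problem with boundary datum $f+\varepsilon\varphi$, and $u^{f+\varepsilon\varphi}-\varepsilon\varphi$ is admissible for the problem with boundary datum $f$ (in the PEC case one first modifies $\varphi$ so as to be constant on each $A_i$), whence
\[
\mathbb E_\sigma\big(u^{f+\varepsilon\varphi}\big)-\mathbb E_\sigma\big(u^{f+\varepsilon\varphi}-\varepsilon\varphi\big)\ \le\ \mathbb E_\sigma\big(u^{f+\varepsilon\varphi}\big)-\mathbb E_\sigma\big(u^f\big)\ \le\ \mathbb E_\sigma\big(u^f+\varepsilon\varphi\big)-\mathbb E_\sigma\big(u^f\big).
\]

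For the upper extreme I would differentiate under the integral sign: since the gradient of ${\bf v}\mapsto Q_\sigma(x,|{\bf v}|)$ equals $\sigma(x,|{\bf v}|){\bf v}$, the growth bounds (P2)--(Q1) furnish an $L^1(\Omega)$ dominating function for the incremental ratios $\varepsilon^{-1}\big(Q_\sigma(x,|\nabla u^f+\varepsilon\nabla\varphi|)-Q_\sigma(x,|\nabla u^f|)\big)$ (split over $B$ and $A$, with the $p$-, resp. $q$-, Hölder pairing of $|\nabla u^f|^{p-1}$ with $|\nabla\varphi|$), so that by dominated convergence
\[
\frac1\varepsilon\Big(\mathbb E_\sigma\big(u^f+\varepsilon\varphi\big)-\mathbb E_\sigma\big(u^f\big)\Big)\ \longrightarrow\ \int_\Omega\sigma(x,|\nabla u^f|)\,\nabla u^f\cdot\nabla\varphi\,dx.
\]
For the lower extreme, the fundamental theorem of calculus along the segment from $\nabla u^{f+\varepsilon\varphi}-\varepsilon\nabla\varphi$ to $\nabla u^{f+\varepsilon\varphi}$ gives
\[
\frac1\varepsilon\Big(\mathbb E_\sigma\big(u^{f+\varepsilon\varphi}\big)-\mathbb E_\sigma\big(u^{f+\varepsilon\varphi}-\varepsilon\varphi\big)\Big)=\int_\Omega\!\int_0^1\sigma\big(x,\big|\nabla u^{f+\varepsilon\varphi}-(1-t)\varepsilon\nabla\varphi\big|\big)\big(\nabla u^{f+\varepsilon\varphi}-(1-t)\varepsilon\nabla\varphi\big)\cdot\nabla\varphi\,dt\,dx,
\]
and here the decisive input is the strong convergence $\nabla u^{f+\varepsilon\varphi}\to\nabla u^f$ in $L^p(B)$ and $L^q(A)$ (respectively $L^p(\Omega)$) from Proposition \ref{lpconv2}: together with continuity of $\sigma$ in $E$ (assumption (P1)) this yields a.e.\ convergence of the integrand to $\sigma(x,|\nabla u^f|)\nabla u^f\cdot\nabla\varphi$, uniformly in $t\in[0,1]$, and the growth bounds give equi-integrability (via a Hölder/Vitali argument), so the limit is again $\int_\Omega\sigma(x,|\nabla u^f|)\nabla u^f\cdot\nabla\varphi\,dx$. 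By the squeeze, the right one-sided limit exists and has this value.

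It remains to identify the linear functional $\varphi\mapsto\int_\Omega\sigma(x,|\nabla u^f|)\nabla u^f\cdot\nabla\varphi\,dx$ with $\Lambda_\sigma(f)$. Since $u^f$ solves the weak Euler--Lagrange equation, $\int_\Omega\sigma(x,|\nabla u^f|)\nabla u^f\cdot\nabla\psi\,dx=0$ for every admissible $\psi$ vanishing on $\partial\Omega$ — in the PEI case $\psi\in W^{1,p}(B)$ with $\psi|_{\partial\Omega}=0$, in the PEC case $\psi$ additionally constant on each $A_i$ — so the integral depends only on the boundary trace $\varphi|_{\partial\Omega}$, and the divergence theorem turns it into $\int_{\partial\Omega}\varphi\,\sigma(x,|\nabla u^f|)\partial_n u^f\,dS=\langle\Lambda_\sigma(f),\varphi\rangle$, exactly as in the weak formulation \eqref{w-DtN}; the boundary terms on $\partial A$ disappear because $\sigma_B\partial_\nu u^f=0$ there in the PEI case and because $\int_{\partial A_i}\sigma_B\partial_\nu u^f\,dS=0$ while $\varphi$ is constant on each $A_i$ in the PEC case — precisely the bookkeeping already performed in Claim (iv) of the proof of Proposition \ref{lpconv2}. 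The main obstacle I anticipate is the passage to the limit in the lower bound: turning the heuristic interchange of limit and integral in the $t$-parametrized family into a rigorous statement requires combining the strong $L^p/L^q$-convergence of the gradients with the $(p-1)$- (resp. $(q-1)$-) growth of ${\bf v}\mapsto\sigma(x,|{\bf v}|){\bf v}$ to exhibit a uniform dominating function, and the three sub-regimes ($p,q\ge2$; $1<p<2$; $1<q<2$), split over $B$ and $A$, together with the PEC boundary bookkeeping, must all be handled simultaneously, which is where most of the technical effort lies.
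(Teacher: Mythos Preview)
Your proposal is correct and follows essentially the same route as the paper: the same minimality sandwich, dominated convergence for the upper side, and the strong convergence of Proposition~\ref{lpconv2} for the lower side. The only cosmetic differences are that the paper writes the lower incremental ratio via the Lagrange mean value theorem (your $\xi_\varepsilon'$) rather than as a $t$-integral, and passes to the limit by extracting, from the sequence realizing the $\liminf$, a further subsequence with an explicit $L^p$/$L^q$ dominating function~$\psi$, rather than phrasing it as equi-integrability; your Nemytskii/Vitali variant is arguably cleaner and avoids the subsequence bookkeeping.
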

\begin{proof} In the following we prove that
\begin{equation}
\label{passaggio_tesi}
\text{\emph d}(\mathbb E_\sigma\circ\mathbb{U_\sigma}) (f;\varphi)=\langle\Lambda_\sigma (f),\varphi\rangle\quad \forall\varphi\in W 
\end{equation}
where
\begin{itemize}
\item[(i)] if $A=\emptyset$, then $W=W^{1,p}(\Omega)$;
\item[(ii)] if ${\sigma}$ satisfies (Q1)-(Q2), then $ W=W^{1,r}(\Omega)$ where $r=\min\{p,q\}$;
\item[(iii)] if ${\sigma}$ satisfies (R), then $W= W^{1,p}(B)$;
\item[(iv)] if ${\sigma}$ satisfies (S), then $W= W^{1,p}(B)$.
\end{itemize}

Since the $\sigma(x,E)E$ product is strictly increasing w.r.t. $E>0$ and $\partial_E Q_\sigma(x,E)=\sigma(x,E)E$, we have 
\begin{equation}
\label{monotonia_Q}
    \begin{split}
0\leq \sigma (x, E_1)E_1
(E_2-E_1)& \leq Q_\sigma(x, E_2)-Q_\sigma(x, E_1)\leq \sigma(x,E_2)E_2
(E_2-E_1),
    \end{split}
\end{equation}
for a.e. $x\in\overline\Omega$ and for any $ 0<E_1\leq E_2$.

$\bullet$ {\bf Case (i)}. Let $u^{f+\varepsilon \varphi}\in W^{1,p}(\Omega)$ (resp. $u^{f}\in W^{1,p}(\Omega)$) be the minimizer of \eqref{minimum_BA} corresponding to $f+\varepsilon\varphi$ (resp. $f$).
For the sake of simplicity, we set
\begin{equation}
\label{funzione_G}
\mathbb G_\sigma(f):=(\mathbb E_\sigma\circ\mathbb{U}_\sigma)(f)=\mathbb{E}_\sigma\left(  u^f \right).
\end{equation}
For any $\varepsilon>0$, we consider the following finite quantities
\begin{align}
\label{magg_G_f_Omega}
\mathbb G_\sigma(f)&=\int_\Omega Q_\sigma(x, |\nabla u^f(x)|)\text{d}x\leq \int_\Omega Q_\sigma\left(x, \left|\nabla u^{f+\varepsilon \varphi}(x)-\varepsilon\nabla \varphi(x)\right|\right)\text{d}x,\\
\label{magg_G_fep_Omega}
\mathbb G_\sigma(f+\varepsilon\varphi)&= \int_\Omega Q_\sigma\left(x, \left|\nabla u^{f+\varepsilon\varphi}(x)\right|\right)\text{d}x\leq \int_\Omega Q_\sigma\left(x, \left|\nabla u^{f}(x)+\varepsilon\nabla \varphi(x)\right|\right)\text{d}x, 
\end{align}
where we have used the minimality of the Dirichlet Energy on the solutions.

Let us consider the incremental ratio $\left[\mathbb G_\sigma(f+\varepsilon\varphi)-\mathbb G_\sigma (f)\right]/\varepsilon$. From \eqref{magg_G_fep_Omega}, we have
\begin{equation}
\label{rapp_incrG_Omega}
\frac{\mathbb G_\sigma(f+\varepsilon\varphi)-\mathbb G_\sigma (f)}\varepsilon\leq \frac 1{ \varepsilon }\int_\Omega 
Q_\sigma\left(x,|\nabla u^{f}+\varepsilon \nabla \varphi(x)|\right)-Q_\sigma\left(x, |\nabla u^f(x)|
\right)\text{d}x.
\end{equation}
The magnitude of the integrand in \eqref{rapp_incrG_Omega} can be easily bounded from above. Indeed, from \eqref{monotonia_Q}, we have
\begin{equation}
\label{magg_ri_Omega}
\begin{split}
& \left| \frac {
Q_\sigma\left(x,|\nabla {u}(x)+\varepsilon \nabla \varphi(x)|\right)-Q_\sigma\left(x, |\nabla u(x)|
\right)}\varepsilon\right|\\
&\qquad\leq\frac 1{| \varepsilon|} \sigma \left(x,\left| \nabla {u}(x)|+|\varepsilon \nabla \varphi(x)\right|\right)(\left|\nabla u(x)|+|\varepsilon \nabla \varphi(x)\right|)|\varepsilon\nabla \varphi(x)| \\
&\qquad\leq \sigma \left(x,\left| \nabla u(x)|+| \nabla \varphi(x)\right|\right)(\left|\nabla u(x)|+| \nabla \varphi(x)\right|)|\nabla \varphi(x)|, 
\end{split}
\end{equation} 
for any $\varepsilon<1$. From the assumption (P2), the last term in \eqref{magg_ri_Omega} is a $L^1$ function and hence, through the Lebesgue Dominate Convergence Theorem, we can pass to the limit in \eqref{rapp_incrG_Omega}:
\begin{equation}
\label{limsup_Omega}
\begin{split}
&\limsup_{\varepsilon\to 0^+} \frac{\mathbb G_\sigma(f+\varepsilon\varphi)-\mathbb G_\sigma (f)}\varepsilon\\
&\leq\lim_{\varepsilon\to 0^+} \frac 1{ \varepsilon }\int_\Omega Q_\sigma\left(x,|\nabla u^{f}(x)+\varepsilon \nabla \varphi(x)|\right)-Q_\sigma\left(x, |\nabla u^f(x)|\right)\text{d}x\\
&=\int_\Omega  {\sigma}\left( x, \left\vert \nabla u^f(x)\right\vert\right)  \nabla u^{f}(x)\cdot \nabla \varphi (x)\ \text{\emph d}x.
\end{split}
\end{equation} 

To prove that the limit of the incremental ratio on the l.h.s. of \eqref{rapp_incrG_Omega} exists, for $\varepsilon\to 0^+$, it is sufficient to prove that the $\liminf$ is greater than or equal to the r.h.s. of \eqref{limsup_Omega}, that is
\begin{equation}
    \label{liminf_rapp_incr}
\liminf_{\varepsilon\to 0^+} \frac{\mathbb G_\sigma(f+\varepsilon\varphi)-\mathbb G_\sigma(f)}\varepsilon\ge\int_\Omega  {\sigma}\left( x, \left\vert \nabla u^f(x)\right\vert\right)  \nabla u^{f}(x)\cdot \nabla \varphi (x)\ \text{\emph d}x.
\end{equation}
Specifically, let $\{\varepsilon_j\}_{j\in\N}$ be a sequence such that $\varepsilon_j\to 0^+$ and
\begin{equation*}
\liminf_{\varepsilon\to 0^+} \frac{\mathbb G_\sigma(f+\varepsilon\varphi)-\mathbb G_\sigma(f)}\varepsilon=\lim_{j\to +\infty}  \frac{\mathbb G_\sigma(f+\varepsilon_j\varphi)-\mathbb G_\sigma(f)}{\varepsilon_j}.
\end{equation*}
By using \eqref{magg_G_f_Omega}, we have
\begin{equation}\label{lowlim_Omega}
\begin{split}
\frac{\mathbb G_\sigma(f+\varepsilon_j\varphi)-\mathbb G_\sigma(f)}{\varepsilon_j} & \geq  \frac 1{ \varepsilon_j }\int_\Omega Q_\sigma\left(x,|\nabla u^{f+\varepsilon_j \varphi}(x)|\right)-Q_\sigma\left(x, |\nabla u^{f+\varepsilon_j\varphi}(x)-\varepsilon_j \nabla\varphi(x)|\right)\text{d}x.
\end{split}
\end{equation}

To show that it is possible to pass to the limit, as $j\to + \infty$, in the r.h.s of \eqref{lowlim_Omega}, we prove that (i) the integrand is convergent and (ii)  there exists a dominating summable function for the integrand. Then, the limit exists thanks to the Dominated Convergence Theorem. 

In order to prove the pointwise convergence of the argument of the r.h.s. of \eqref{lowlim_Omega}, we observe that $Q_\sigma(x, \cdot)$ is in $C^1([0,+\infty[)$ for a.e. $x\in\Omega$ and, therefore, the integrand of the r.h.s. of \eqref{lowlim_Omega} can be written as 
\begin{equation}
\label{rapp_incr_intG}
\begin{split}
&\frac{Q_\sigma\left(x,|\nabla u^{f+\varepsilon\varphi}(x)|\right)-Q_\sigma\left(x, |\nabla u^{f+\varepsilon \varphi}(x)-\varepsilon\nabla \varphi(x)|\right)}\varepsilon\\
&\qquad\qquad=\sigma \left(x,\xi_\varepsilon' \right)\xi_\varepsilon' \frac{ \left|\nabla u^{f+\varepsilon\varphi}(x)\right|-\left|\nabla u^{f+\varepsilon\varphi}(x)-\varepsilon\nabla \varphi(x)\right|}\varepsilon\\
\end{split}
\end{equation}
with
\begin{equation*}
\begin{split}
\xi_\varepsilon'\in &[\min\{\left|\nabla u^{f+\varepsilon \varphi}(x)\right|, \left|\nabla u^{f+\varepsilon \varphi}(x)-\varepsilon \nabla \varphi(x)\right|\},\\ 
&\qquad\qquad\qquad\qquad\qquad\qquad\max\{\left|\nabla u^{f+\varepsilon \varphi}(x)\right|,\left|\nabla u^{f+\varepsilon\varphi}(x)-\varepsilon \nabla \varphi(x)\right|\}].
\end{split}
\end{equation*}
The r.h.s. in \eqref{rapp_incr_intG} is equal to
\begin{equation}\label{last_Omega}
\begin{sistema}
\sigma \left(x,\xi_\varepsilon' \right)\xi_\varepsilon' \dfrac{2\nabla u^{f+\varepsilon\varphi}(x)\cdot\nabla \varphi (x)-\varepsilon \left|\nabla\varphi(x)\right|^2}{|\nabla u^{f+\varepsilon\varphi}(x)|+|\nabla u^{f+\varepsilon\varphi}(x)-\varepsilon \nabla \varphi(x)|}\quad\text{if}\ |\nabla u^{f+\varepsilon\varphi}(x)|\neq 0,\vspace{0.2cm}\\
\sigma \left(x,\xi_\varepsilon' \right)\xi_\varepsilon' \sign(-\varepsilon)|\nabla\varphi (x)|\qquad\qquad\qquad\qquad\qquad \text{otherwise}.
\end{sistema}
\end{equation}
Exploiting the fact that $\xi_\varepsilon'\to |\nabla u^f(x)|$ as $\varepsilon\to 0^+$ for a.e. $x\in\Omega$ thanks to \eqref{fund_conv_Omega}, we find that both expression appearing in \eqref{last_Omega} tend to ${\sigma}\left( x, \left\vert \nabla u^f(x)\right\vert\right)  \nabla u^f(x)\cdot \nabla \varphi (x)$, as $\varepsilon\to 0^+$. 

To prove the existence of a summable dominating function for the integrand of \eqref{lowlim_Omega}, we recall from Proposition \ref{lpconv2} (see the convergence result \eqref{fund_conv_Omega}), that $\nabla u^{f+\varepsilon\varphi}\to\nabla u^f$ strongly in $L^p(\Omega)$, when $\varepsilon\to 0^+$.
By standard arguments, we can say that there exists a subsequence $\{\varepsilon_{j_h}\}_{h\in\N}$ such that $\nabla u^{f+\varepsilon_{j_h}\varphi}\to\nabla u^f$ a.e. in $\Omega$, and there exists a measurable real function $\psi$, defined on $\Omega$, such that
\begin{equation}
\label{dominating_function_Omega}
|\nabla u^{f+\varepsilon_{j_h}\varphi}|\leq\psi\ \text{in}\ \Omega,\quad\int_\Omega\psi(x)^p dx<+\infty.
\end{equation}
When $p\ge 2$, for any $\varepsilon_{j_h}<1$, we have
\begin{equation}
\label{dominazione_Omega}
\begin{split}
&\left|\frac{Q_\sigma\left(x, |\nabla u^{f+\varepsilon_{j_h}\varphi}(x)-\varepsilon_{j_h} \nabla\varphi(x)|\right)-Q_\sigma\left(x,|\nabla u^{f+\varepsilon_{j_h} \varphi}(x)|\right)}{\varepsilon_{j_h}} \right|\\
&\le \sigma(x,|\nabla u^{f+\varepsilon_{j_h}\varphi}(x)|+|\varepsilon_{j_h}  \nabla\varphi(x)| )( |\nabla u^{f+\varepsilon_{j_h} \varphi}(x)|+|\varepsilon_{j_h} \nabla\varphi(x)|)|\nabla \varphi(x)|\\
&\leq \overline\sigma  \left[1+\left(\frac{|\nabla u^{f+\varepsilon_{j_h}\varphi}(x)|+|\varepsilon_{j_h}  \nabla\varphi(x)|}{E_0}\right )^{p-2}\right] \left( |\nabla u^{f+\varepsilon_{j_h}\varphi}(x)|+|\varepsilon_{j_h} \nabla\varphi(x)|\right)|\nabla\varphi(x)|\\
&\leq \overline\sigma  \left[1+\left(\frac{\psi(x)+ | \nabla\varphi(x)| }{E_0}\right)^{p-2}\right]( \psi(x) + |\nabla\varphi(x)|)^2\\
&\leq \overline\sigma \left[ \left ( {\psi(x) + |\nabla\varphi(x)|}\right)^2+ \left(\frac{\psi(x) + | \nabla\varphi(x)|}{E_0}\right)^{p} \right],
\end{split}
\end{equation}
where in the first inequality we have used \eqref{monotonia_Q} and the convexity of $Q_\sigma(\cdot,E)$ (see Remark \ref{rem_ipotesi_lin}); in the second inequality we have used (P2); in the third inequality we have used \eqref{dominating_function_Omega}; the fourth inequality is straightforward. It is worth noting that the inequalities chain \eqref{dominazione_Omega} for $1<p<2$ can be similarly proven by using the second line of the assumption (P2).

Since $\overline\sigma$ in \eqref{dominazione_Omega} is independent of $\varepsilon_{j_h}$, the Dominate Convergence Theorem provides \eqref{liminf_rapp_incr} for any $p>1$.

Summing up, from \eqref{limsup_Omega} and \eqref{liminf_rapp_incr}, it turns out that 
\begin{equation*}
\lim_{\varepsilon \to 0^+}  \frac{\mathbb G_\sigma(f+\varepsilon\varphi)-\mathbb G_\sigma(f)}{\varepsilon}\\
= \int_\Omega  {\sigma}\left( x, |\nabla u^f(x)|\right) \nabla u^{f}(x)\cdot\nabla \varphi (x) \text{d}x.
\end{equation*}
The conclusion follows by observing that this equation is nothing but \eqref{passaggio_tesi}.

$\bullet$ {\bf Case (ii)}. 
Let $u^{f+\varepsilon \varphi}\in W^{1,r}(\Omega)$ (resp. $u^{f}\in W^{1,r}(\Omega)$) be the minimizer of \eqref{minimum_BA} corresponding to $f+\varepsilon\varphi$ (resp. $f$). As in case (i), we consider the function $\mathbb G_\sigma$ defined in \eqref{funzione_G}. Analogously to case (i), we can upper bound the lim sup and lower bound the lim inf of the incremental ratio $\left[\mathbb G_\sigma(f+\varepsilon\varphi)-\mathbb G_\sigma (f)\right]/\varepsilon$ with the same quantity.

Regarding the $\limsup$, arguing as in case (i), we obtain \eqref{limsup_Omega}, that is
\begin{equation}
\label{limsup_AB}
\begin{split}
\limsup_{\varepsilon\to 0^+} \frac{\mathbb G_\sigma(f+\varepsilon\varphi)-\mathbb G_\sigma (f)}\varepsilon\le \int_\Omega  {\sigma}\left( x, \left\vert \nabla u^f(x)\right\vert\right)  \nabla u^{f}(x)\cdot \nabla \varphi (x)\ \text{\emph d}x.
\end{split}
\end{equation} 

Regarding the $\liminf$, we have to show that it is possible to pass to the limit on the r.h.s. of \eqref{lowlim_Omega} by using the Dominated Convergence Theorem. Following line by line the analogous part of the proof of case  (i), we can show that the integrand is convergent to ${\sigma}\left( x, \left\vert \nabla u^f(x)\right\vert\right)  \nabla u^f(x)\cdot \nabla \varphi (x)$, as $\varepsilon\to 0^+$. Therefore, it remains to prove that there exists a dominating summable function for the integrand appearing on the r.h.s of \eqref{lowlim_Omega}. 
To prove this, we notice that in Proposition \ref{lpconv2} (see the convergence result \eqref{fund_conv_AB}), we have shown that $\nabla u^{f+\varepsilon\varphi}\to\nabla u^f$ in $L^p(B)$ and $\nabla u^{f+\varepsilon\varphi}\to\nabla u^f$ in $L^q(A)$, when $\varepsilon\to 0^+$. Then by standard arguments, we can say that there exists a subsequence $\{\varepsilon_{j_h}\}_{h\in\N}$ such that  $\nabla u^{f+\varepsilon_{j_h}\varphi}\to\nabla u^f$ a.e. in $\Omega$, and there exists a measurable real function $\psi$, defined on $\Omega$, such that
\begin{equation}
\label{dominating_function_AB}
|\nabla u^{f+\varepsilon_{j_h}\varphi}|\leq\psi\ \text{in}\ \Omega
,\quad\int_B\psi(x)^p dx+\int_A\psi(x)^q dx<+\infty.
\end{equation}
For any $p,q\ge 2$ and for any $\varepsilon_{j_h}$, we have
\begin{equation}
\label{dominazione_AB}
\begin{split}
&\int_\Omega\left|\frac{Q_\sigma\left(x, |\nabla u^{f+\varepsilon_{j_h}\varphi}(x)-\varepsilon_{j_h} \nabla\varphi(x)|\right)-Q_\sigma\left(x,|\nabla u^{f+\varepsilon_{j_h} \varphi}(x)|\right)}{\varepsilon_{j_h}} \right|dx\\
&\leq\displaystyle\int_\Omega \sigma(x,|\nabla u^{f+\varepsilon_{j_h}\varphi}(x)|+|\varepsilon_{j_h}  \nabla\varphi(x)| )( |\nabla u^{f+\varepsilon_{j_h} \varphi}(x)|+|\varepsilon_{j_h} \nabla\varphi(x)|)|\nabla \varphi(x)|dx\\
&\leq \overline{\sigma} \displaystyle\int_B \left[1+\left(\frac{|\nabla u^{f+\varepsilon_{j_h}\varphi}(x)|+|\varepsilon_{j_h}  \nabla\varphi(x)|}{E_0}\right )^{p-2}\right] \left( |\nabla u^{f+\varepsilon_{j_h}\varphi}(x)|+|\varepsilon_{j_h} \nabla\varphi(x)|\right)|\nabla\varphi(x)|dx\\
&\ \ + \overline{\sigma} \displaystyle\int_A \left[1+\left(\frac{|\nabla u^{f+\varepsilon_{j_h}\varphi}(x)|+|\varepsilon_{j_h}  \nabla\varphi(x)| }{E_0}\right)^{q-2}\right] \left( |\nabla u^{f+\varepsilon_{j_h}\varphi}(x)|+|\varepsilon_{j_h} \nabla\varphi(x)|\right)|\nabla\varphi(x)|dx\\
&\leq \overline{\sigma} \displaystyle\int_B \left[1+\left(\frac{\psi(x)+ | \nabla\varphi(x)| }{E_0}\right)^{p-2}\right]( \psi(x)+ |\nabla\varphi(x)|)^2dx\\
&\ \ + \overline{\sigma} \displaystyle\int_A \left[1+\left(\frac{\psi(x)+ | \nabla\varphi(x)| }{E_0}\right)^{q-2}\right]( \psi(x)+ |\nabla\varphi(x)|)^2dx\\
&\leq \overline{\sigma}\displaystyle\int_B \left (\psi(x)+ |\nabla\varphi(x)|\right)^2+ \left(\frac{\psi(x)+ | \nabla\varphi(x)|}{E_0}\right)^{p}dx\\
&\ \ + \overline{\sigma}\displaystyle\int_A \left ( \psi(x)+ |\nabla\varphi(x)|\right)^2+ \left(\frac{\psi(x)+ | \nabla\varphi(x)|}{E_0}\right)^{q}dx,
\end{split}
\end{equation}
where in the first inequality we have used \eqref{monotonia_Q} and the convexity of $Q_\sigma(\cdot,E)$ (see Remark \ref{rem_ipotesi_lin}); in the second inequality we have used (P2); in the third inequality we have used \eqref{dominating_function_AB}; the fourth inequality is straightforward.

The inequalities chain \eqref{dominazione_AB} for $1<p<2$ can be  similarly proven by using the second line of the assumption (P2). 

By applying the Dominate Convergence Theorem, the limit as $\varepsilon_{j_h}\to 0$ in \eqref{lowlim_Omega} provides
\begin{equation}
\label{liminf_AB}
\begin{split}
\liminf_{\varepsilon\to 0^+} \frac{\mathbb G_\sigma(f+\varepsilon\varphi)-\mathbb G_\sigma (f)}\varepsilon\ge \int_\Omega  {\sigma}\left( x, \left\vert \nabla u^f(x)\right\vert\right)  \nabla u^{f}(x)\cdot \nabla \varphi (x)\ \text{\emph d}x.
\end{split}
\end{equation} 

Finally, the conclusion follows by combining \eqref{limsup_AB} and \eqref{liminf_AB}.

$\bullet$ {\bf Case (iii)}. The proof is obtained as in case (ii) but replacing the role of $\Omega$ with $B$ and exploiting the strong convergence result of Proposition \ref{lpconv2}, case (iii).

$\bullet$ {\bf Claim (iv)}. The proof is obtained as in case (ii) but replacing the role of $\Omega$ with $B$ and exploiting the strong convergence result of Proposition \ref{lpconv2}, case (iv).
\end{proof}
As a straightforward consequence of Proposition \ref{gateauxprop}, we have the following expressions for the derivative of $\mathbb E_\sigma$, with respect to $u$ and $u^f$, respectively.

\begin{prop} Let $f\in X_\diamond(\partial \Omega)$, ${\sigma}$ satisfying (P1), (P2) and (P3), and $u,\varphi\in W$, then
\begin{align*}
&\text{\emph d}\mathbb F_\sigma (u;\varphi)=\int_\Omega  {\sigma}\left( x, \left\vert \nabla u(x)\right\vert\right)  \nabla u(x)\cdot \nabla \varphi (x)\ \text{\emph d}x,\\
&\text{\emph d}\mathbb F_\sigma (u^f;\varphi)=\langle\Lambda_\sigma (f),\varphi\rangle,
\end{align*} 
where
\begin{itemize}
\item[(i)] if $A=\emptyset$, then $W=W^{1,p}(\Omega)$ and $u^{f}$ is the minimizer of \eqref{minimum_BA};
\item[(ii)] if ${\sigma}$ satisfies (Q1)-(Q2), then $ W=W^{1,r}(\Omega)$ where $r=\min\{p,q\}$ and $u^{f}$ is the minimizer of \eqref{minimum_BA};
\item[(iii)] if ${\sigma}$ satisfies (R), then $W= W^{1,p}(B)$ and $u^{f}$ is the minimizer of \eqref{minimum_PEI};
\item[(iv)] if ${\sigma}$ satisfies (S), then $W= W^{1,p}(B)$ and $u^{f}$ is the minimizer of \eqref{minimum_PEC}.
\end{itemize}
\end{prop}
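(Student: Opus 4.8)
Both displayed formulas rest on ingredients already developed in the proof of Proposition \ref{gateauxprop}; here $\mathbb F_\sigma$ denotes the Dirichlet energy $\mathbb E_\sigma$ of \eqref{energy_BA} (in the PEI and PEC cases, the reduced energy \eqref{energy_PEC}) regarded as a functional on $W$. The only genuinely new point is that the first identity holds for an \emph{arbitrary} $u\in W$, not merely for a minimizer $u^f$; the second identity then follows by specialization and an integration by parts. So the plan is: (1) prove the first identity by differentiating under the integral sign; (2) deduce the second one.

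For step (1), fix $u,\varphi\in W$ and, for $|t|<1$, write $\mathbb E_\sigma(u+t\varphi)=\int_\Omega Q_\sigma(x,|\nabla u(x)+t\nabla\varphi(x)|)\,\text{d}x$ (with $B$ in place of $\Omega$ in the PEI/PEC cases). Since $Q_\sigma(x,\cdot)\in C^1([0,+\infty))$ with $\partial_E Q_\sigma(x,E)=\sigma(x,E)E$, and this product vanishes at $E=0$ (because $p,q>1$ forces $\sigma(x,E)E$ to vanish there, as follows from the growth bounds (P2), (Q1)), the integrand is differentiable in $t$ at $t=0$ with derivative $\sigma(x,|\nabla u(x)|)\,\nabla u(x)\cdot\nabla\varphi(x)$; at the points of the set $\{\nabla u=0\}$ this is read off exactly from the case distinction \eqref{last_Omega}. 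The difference quotients are dominated, uniformly for $|t|<1$, by the $L^1$ function on the right-hand side of \eqref{magg_ri_Omega} (equivalently of \eqref{dominazione_Omega} and \eqref{dominazione_AB}), using \eqref{monotonia_Q}, the convexity of $Q_\sigma(x,\cdot)$, and the upper bounds in (P2), (Q1). The Dominated Convergence Theorem then yields $\text{d}\mathbb E_\sigma(u;\varphi)=\int_\Omega\sigma(x,|\nabla u|)\,\nabla u\cdot\nabla\varphi\,\text{d}x$. This is a rerun of the ``$\limsup$'' computation in the proof of Proposition \ref{gateauxprop}, simplified by the fact that, $u$ not being a minimizer, no separate ``$\liminf$'' argument is needed and a genuine two-sided limit exists.

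For step (2), specialize the first identity to $u=u^f$, the minimizer of \eqref{minimum_BA}, \eqref{minimum_PEI} or \eqref{minimum_PEC}. Since $u^f$ is a weak solution, $\dive(\sigma(x,|\nabla u^f|)\nabla u^f)=0$ in $\Omega$ (resp.\ in $B$), so the divergence Theorem turns $\int\sigma(x,|\nabla u^f|)\nabla u^f\cdot\nabla\varphi\,\text{d}x$ into the boundary integral $\int_{\partial\Omega}\varphi\,\sigma(x,|\nabla u^f|)\,\partial_n u^f\,\text{d}S=\langle\Lambda_\sigma(f),\varphi\rangle$, by the weak form \eqref{w-DtN} of the DtN operator. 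In the PEI and PEC cases the integration by parts is exactly the one carried out in Claim (iv) of the proof of Proposition \ref{lpconv2}: one writes the interior integral over $B$ alone, and the would-be contributions over $\partial A$ vanish thanks to the homogeneous Neumann condition on $\partial A$ in \eqref{problem_PEI} (PEI), or because the net flux through each $\partial A_i$ is zero while $\varphi$ equals a single constant on $\partial A_i$ (PEC, using the third line of \eqref{problem_PEC}).

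The only delicate points have already been resolved inside Proposition \ref{gateauxprop}: justifying the differentiation under the integral sign at the zero set of $\nabla u$, and producing a $t$-uniform $L^1$ dominating function in the regimes $1<p<2$ and/or $1<q<2$ (via the second lines of (P2), (Q1)). A minor bookkeeping point is the integration by parts in the PEC case, where $u^f$ is only piecewise constant on $A$; this is routine given Claim (iv) of Proposition \ref{lpconv2}. Everything else is a direct application of results already established in the excerpt.
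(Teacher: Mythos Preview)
Your argument is correct and is exactly what the paper intends by calling this a ``straightforward consequence of Proposition~\ref{gateauxprop}'': the first identity is the limit computation \eqref{magg_ri_Omega}--\eqref{limsup_Omega} of that proof, carried out for an arbitrary $u$ in place of $u^f$ (so, as you observe, the separate $\liminf$ step is unnecessary and a genuine two-sided limit exists), and the second identity is the specialization to $u=u^f$ combined with the divergence theorem and \eqref{w-DtN}. One caveat on the PEC case: you assert that $\varphi$ is constant on each $\partial A_i$, but this does not follow from $\varphi\in W^{1,p}(B)$ alone---in Claim~(iv) of Proposition~\ref{lpconv2} the test function was $u^{f+\varepsilon\varphi}-u^f$, which \emph{is} locally constant on $A$ because both solutions are; for the identity here to hold one must read $W$ in item~(iv) as the admissible class of \eqref{minimum_PEC}, namely functions in $W^{1,p}(\Omega)$ with $\nabla\varphi=0$ on $A$, which is the paper's tacit intent and under which your integration by parts goes through verbatim.
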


 \section{The Monotonicity Principle}
 \label{monotonicity_sec}
Here we prove the Monotonicity Principle of the average-DtN, with respect to the electrical conductivity.

Specifically, first we demonstrate a Monotonicity Principle for the Dirichlet Energy (Theorem \ref{monoten})
\begin{equation*}
\sigma_1\leq\sigma_2\quad\Longrightarrow\quad\mathbb E_{\sigma_1}(u_{1}^f) \leq \mathbb E_{\sigma_2} (u_{2}^f)\quad \forall f\in X_\diamond^p(\partial\Omega),
\end{equation*}
where $u_{i}^f$ is the minimizer of the Dirichlet Energy with $\sigma=\sigma_i$, for $i=1,2$ and $f$ is the applied boundary voltage and we subsequently \lq\lq translate\rq\rq \ the Dirichlet Energy in terms of boundary data via the following fundamental relationship (see Theorem \ref{transferthm})
\begin{equation}
\label{translation}
\mathbb{E}_{\sigma}\left( u^f\right)=\left\langle\overline{\Lambda}_\sigma  \left( f\right) ,f \right\rangle
\quad\forall f\in X^p_\diamond(\partial\Omega).
\end{equation}
The operator $\overline{\Lambda}_\sigma$ appearing in \eqref{translation} is called Average DtN and is defined as
\begin{equation*}
\overline{\Lambda}_\sigma: f\in X^p_\diamond(\partial\Omega)\mapsto  \int_{0}^{1}\Lambda_\sigma  \left( \alpha f\right) \text{d}\alpha\in X^p_\diamond(\partial\Omega)',
\end{equation*}
where
\begin{equation*}
\langle\overline{\Lambda}_\sigma( f),\varphi\rangle=
 \int_{0}^{1}\left\langle\Lambda_\sigma  \left( \alpha f\right) , \varphi\right\rangle\text{d}\alpha\quad\forall\varphi \in X^p_\diamond(\partial\Omega).
\end{equation*} 
Operator $\overline{\Lambda}_\sigma$ gives the average flow of the electrical current density through $\partial \Omega$ for an applied boundary potential of the type $\alpha f$, for $\alpha \in \left[0, 1\right]$. It is the key \lq\lq tool\rq\rq \ for transferring the Monotonicity Principle for the Dirichlet Energy to the boundary data and it replaces the usual DtN operator $\Lambda_\sigma$ when treating nonlinear problems (see \cite{corboesposito2021monotonicity} for its very first introduction).

Finally, as a byproduct of Theorems \ref{monoten} and \ref{transferthm}, we get the Monotonicity Principle for the operator $\overline{\Lambda}_\sigma$ (see Theorem \ref{monothm}), i.e.
\begin{equation}
\label{MonoP}    
\sigma_1\leq\sigma_2\quad\Longrightarrow\quad \left\langle\overline{\Lambda}_{\sigma_1}  \left( f\right) ,f \right\rangle
\leq \left\langle\overline{\Lambda}_{\sigma_2}  \left( f\right) ,f \right\rangle \quad\forall f\in X_\diamond^p(\partial\Omega).
\end{equation}
In \eqref{MonoP} $\sigma_1\leq\sigma_2$ means that
\begin{equation}
    \label{defmon}
\sigma_1(x,E)\leq \sigma_2(x,E) \quad \text{for a.e.}\ x\in\overline\Omega\ \text{and}\ \forall\ E>0.
\end{equation}
Furthermore, let us also observe that there exist four possibly different subsets $B_i, A_i$ of $\Omega$, $i=1,2$, such that $\sigma_i$ is defined on $\Omega=B_i\cup A_i$, $i=1,2$, with the same growth exponent $p$ on $B_i$ and possibly different growth exponents $q_i$ on $A_i$, $i=1,2$. Therefore, no inclusion relation is imposed between $A_1$ and $A_2$ but only the validity of relation \eqref{defmon}.

\begin{rem}
It is worth noting that the operator
\begin{equation}
\Pi:\sigma\in L^\infty_+(\Omega) \mapsto \overline{\Lambda}_\sigma
\end{equation}
is monotonic in the sense of the MP stated in \eqref{MonoP}, where $\overline{\Lambda}_\sigma$ the average DtN operator defined as in \eqref{average_DtN}. On the hand, for a prescribed $\sigma$, the average DtN is a (non linear) monotonic operator in the classical sense stated in the books of Brezis \cite{brezis1973ope} and Zeidler \cite{zeidler1987nonlinear}, i.e.
\begin{equation}
   \left\langle\overline{\Lambda}_{\sigma}  \left( f_1\right)-\overline{\Lambda}_{\sigma}  \left( f_2\right) ,f_1-f_2 \right\rangle \ge 0 \quad\forall f_1, f_2\in X_\diamond^p(\partial\Omega).
\end{equation}
\end{rem}

The Monotonicity Principle for the Dirichlet Energy in the nonlinear case can be easily proved as follows.
\begin{thm}\label{monoten}
Let $1<p,q<+\infty$, $\sigma_1$, $\sigma_2$ satisfying (P1), (P2) and (P3), then
\begin{equation*}
\sigma_1\leq\sigma_2\quad\Longrightarrow\quad\mathbb E_{\sigma_1}(u_{1}^f) \leq \mathbb E_{\sigma_2} (u_{2}^f)\quad \forall f\in X_\diamond^p(\partial\Omega),
\end{equation*}
where $\sigma_1\leq \sigma_2$ is meant in the sense \eqref{defmon} and
\begin{itemize}
\item[(i)] if $A=\emptyset$, then $u_1^{f}$ (resp. $u_2^{f}$) is the minimizer of \eqref{minimum_BA} corresponding to $\sigma_1$ (resp. $\sigma_2$);
\item[(ii)] if $\sigma_1, \sigma_2$ satisfy (Q1)-(Q2), then $u_1^{f}$ (resp. $u_2^{f}$) is the minimizer of \eqref{minimum_BA} corresponding to $\sigma_1$ (resp. $\sigma_2$);
\item[(iii)] if $\sigma_1, \sigma_2$ satisfy (R), then $u_1^{f}$ (resp. $u_2^{f}$) is the minimizer of \eqref{minimum_PEI} corresponding to $\sigma_1$ (resp. $\sigma_2$)
\item[(iv)] if $\sigma_1, \sigma_2$ satisfy (S), then then $u_1^{f}$ (resp. $u_2^{f}$) is the minimizer of \eqref{minimum_PEC} corresponding to $\sigma_1$ (resp. $\sigma_2$).
\end{itemize}
\end{thm}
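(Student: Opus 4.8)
The plan is to establish, in each of the four cases (i)--(iv), the chain
\[
\mathbb{E}_{\sigma_1}(u_1^f)\ \le\ \mathbb{E}_{\sigma_1}(u_2^f)\ \le\ \mathbb{E}_{\sigma_2}(u_2^f),
\]
where the first inequality is the variational (minimality) property of $u_1^f$ tested against the competitor $u_2^f$, and the second is a pointwise comparison of the Dirichlet Energy densities $Q_{\sigma_1}\le Q_{\sigma_2}$ integrated over the relevant domain.

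First I would record the elementary density estimate. Since $\sigma_1(x,\cdot)$ and $\sigma_2(x,\cdot)$ are non-negative and $\sigma_1(x,E)\le\sigma_2(x,E)$ for a.e.\ $x$ and every $E>0$ by \eqref{defmon}, integrating in $\xi$ the defining formulas for $Q_B$ and $Q_A$ gives
\[
Q_{\sigma_1}(x,E)=\int_0^E\sigma_1(x,\xi)\,\xi\,\text{d}\xi\ \le\ \int_0^E\sigma_2(x,\xi)\,\xi\,\text{d}\xi=Q_{\sigma_2}(x,E)
\]
for a.e.\ $x\in\Omega$ and every $E\ge0$ (separately on $B$ and on $A$; in the PEI and PEC cases the $A$-contribution is simply absent from $\mathbb{E}_\sigma$). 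Integrating this inequality over $\Omega$ (resp.\ over $B$) yields $\mathbb{E}_{\sigma_1}(v)\le\mathbb{E}_{\sigma_2}(v)$ for every admissible $v$, which is precisely the second inequality of the chain applied to $v=u_2^f$.

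For the first inequality I would observe that in each of cases (i)--(iv) the class of competitors over which the Dirichlet Energy is minimised --- those $v$ equal to $f$ on $\partial\Omega$, lying in $W^{1,p}(\Omega)\cup W^{1,q}(\Omega)$ in cases (i)--(ii), in $W^{1,p}(B)$ in (iii), and in $W^{1,p}(\Omega)$ subject to $|\nabla v|=0$ on $A$ in (iv) --- depends only on $f$ and on the geometry, not on the conductivity; hence $u_2^f$ is an admissible competitor in the problem defining $u_1^f$, and minimality gives $\mathbb{E}_{\sigma_1}(u_1^f)\le\mathbb{E}_{\sigma_1}(u_2^f)$. Combining with the previous step closes the argument. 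There is essentially no serious obstacle here: once the existence and uniqueness theorem above provides $u_1^f$ and $u_2^f$, the statement is genuinely elementary. The only point deserving a moment's attention --- relevant if the two conductivities are modelled on different decompositions $\Omega=B_i\cup A_i$ as noted in the Remark --- is the admissibility of $u_2^f$ as a competitor for $\sigma_1$: this follows without any inclusion relation between $A_1$ and $A_2$, because $\mathbb{E}_{\sigma_1}(u_2^f)\le\mathbb{E}_{\sigma_2}(u_2^f)<+\infty$, together with the coercivity supplied by the left-hand inequalities in (P2)/(Q1), places $u_2^f$ in the Sobolev space required by the $\sigma_1$-problem, the boundary datum and, in case (iv), the constraint on $|\nabla u_2^f|$ being inherited directly.
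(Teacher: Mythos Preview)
Your proposal is correct and follows essentially the same route as the paper: the paper's proof is precisely the two-step chain $\mathbb{E}_{\sigma_1}(u_1^f)\le\mathbb{E}_{\sigma_1}(u_2^f)\le\mathbb{E}_{\sigma_2}(u_2^f)$, using minimality of $u_1^f$ against the competitor $u_2^f$ and then the pointwise inequality $Q_{\sigma_1}\le Q_{\sigma_2}$. If anything, your version is more careful than the paper's in spelling out why $u_2^f$ is admissible for the $\sigma_1$-problem in each of the four cases and in addressing the possibility of different decompositions $B_i\cup A_i$.
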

\begin{proof}
Since $u_{2}^f$ is an admissible function for problem \eqref{energy_BA} with $\sigma=\sigma_1$, we have
\begin{equation*}
\begin{split}
\mathbb{E}_{\sigma_1}( u_{1}^f)& \leq \mathbb{E}_{\sigma_1}(u_{2}^f)  =\int_\Omega Q_{\sigma_1} (x,|\nabla u_{2}^f(x)|)\ \text{d}x\\
& = \int_\Omega \int_{0}^{|\nabla u_{2}^f(x)|}\sigma_1\left( x,\xi\right) \xi\ \text{d}\xi\ \text{d}x \leq \int_\Omega \int_{0}^{|\nabla u_{2}^f(x)|} \sigma_2\left( x,\xi\right)\xi\  \text{d}\xi\ \text{d}x= \mathbb E_{\sigma_2}(u_{2}^f),
\end{split}
\end{equation*}
where the second inequality follows from the assumption $\sigma_1\leq\sigma_2$.
\end{proof}

The Dirichlet Energy can be transferred to a boundary measurement involving $\overline{\Lambda}_\sigma$ as follows.
\begin{thm}\label{transferthm}
Let $1<p,q<+\infty$ and $\sigma$ satisfying (P1), (P2) and (P3), then
\begin{equation*}
\mathbb{E}_{\sigma}\left( u^f\right)=\left\langle\overline{\Lambda}_\sigma \left( f\right) ,f \right\rangle 
\quad \forall f\in X_\diamond^p(\partial\Omega),
\end{equation*}
where 
\begin{itemize}
\item[(i)] if $A=\emptyset$, then $u^{f}$ is the minimizer of \eqref{minimum_BA};
\item[(ii)] if ${\sigma}$ satisfies (Q1)-(Q2), then $u^{f}$ is the minimizer of \eqref{minimum_BA};
\item[(iii)] if ${\sigma}$ satisfies (R), then then $u^{f}$ is the minimizer of \eqref{minimum_PEI};
\item[(iv)] if ${\sigma}$ satisfies (S), then then $u^{f}$ is the minimizer of \eqref{minimum_PEC}.
\end{itemize}
\end{thm}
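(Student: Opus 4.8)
The plan is to obtain the identity by integrating, in the scalar parameter $\alpha\in[0,1]$, the G\^ateaux derivative provided by Proposition \ref{gateauxprop}. I would set
\[
g(\alpha):=(\mathbb E_\sigma\circ\mathbb U_\sigma)(\alpha f)=\mathbb E_\sigma\big(u^{\alpha f}\big),\qquad \alpha\in[0,1],
\]
which is well posed because $\alpha f\in X^p_\diamond(\partial\Omega)$ whenever $f$ is. At $\alpha=0$ the function $u\equiv 0$ is admissible for the relevant minimum problem (\eqref{minimum_BA} in cases (i)--(ii), \eqref{minimum_PEI} in case (iii), \eqref{minimum_PEC} in case (iv)) with boundary datum $0$, and $\mathbb E_\sigma(0)=\int Q_\sigma(x,0)\,\text{d}x=0$; by nonnegativity of the Dirichlet Energy this forces $g(0)=\mathbb E_\sigma(u^0)=0$ (so that also $\nabla u^0=0$ a.e., by coercivity of $Q_\sigma$). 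Since $g(1)=\mathbb E_\sigma(u^f)$ and, by definition, $\langle\overline\Lambda_\sigma(f),f\rangle=\int_0^1\langle\Lambda_\sigma(\alpha f),f\rangle\,\text{d}\alpha$, it suffices to prove that $g\in C^1([0,1])$ with $g'(\alpha)=\langle\Lambda_\sigma(\alpha f),f\rangle$, whence $\mathbb E_\sigma(u^f)=g(1)-g(0)=\int_0^1 g'(\alpha)\,\text{d}\alpha=\langle\overline\Lambda_\sigma(f),f\rangle$ by the Fundamental Theorem of Calculus.

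To compute $g'$, I would apply the appropriate case of Proposition \ref{gateauxprop} at the boundary datum $\alpha f\in X^p_\diamond(\partial\Omega)$ and evaluate the directional derivative along $\varphi=f$: this $\varphi$ belongs to the space $W$ prescribed in each case, since $W=W^{1,p}(\Omega)$ in (i), $W=W^{1,r}(\Omega)\supseteq W^{1,p}(\Omega)$ in (ii) with $r=\min\{p,q\}$, and in (iii)--(iv) the restriction of $f$ to $B$ lies in $W^{1,p}(B)$. This gives $\lim_{\varepsilon\to 0^+}\big(g(\alpha+\varepsilon)-g(\alpha)\big)/\varepsilon=\langle\Lambda_\sigma(\alpha f),f\rangle$; repeating the argument with $-f$ in place of $f$ produces the matching left-hand limit, so $g$ is two-sided differentiable on $(0,1)$ with $g'(\alpha)=\langle\Lambda_\sigma(\alpha f),f\rangle$ and admits the relevant one-sided derivatives at $\alpha=0,1$. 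In particular $g\in C([0,1])$.

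It then remains to show that $\alpha\mapsto\langle\Lambda_\sigma(\alpha f),f\rangle$ is continuous on $[0,1]$. Using the weak form of the DtN operator \eqref{w-DtN}, one has $\langle\Lambda_\sigma(\alpha f),f\rangle=\int_\Omega\sigma(x,|\nabla u^{\alpha f}|)\,\nabla u^{\alpha f}\cdot\nabla\psi\,\text{d}x$ for a fixed $W^{1,p}$–extension $\psi$ of $f$ (with integration over $B$, and the boundary terms on $\partial A$ discarded as in the proof of Proposition \ref{lpconv2}, in cases (iii)--(iv)). Given $\alpha_k\to\alpha$ in $[0,1]$, writing $\alpha_k f=\alpha f+(\alpha_k-\alpha)f$ and invoking Proposition \ref{lpconv2} yields $\nabla u^{\alpha_k f}\to\nabla u^{\alpha f}$ in the relevant norm among $L^p(\Omega)$, $L^p(B)$, $L^q(A)$; passing to a subsequence one obtains a.e. convergence together with an $L^p$– (and $L^q$– on $A$) dominating function, so that the continuity assumption (P1) gives a.e. convergence of the integrands while the growth bounds (P2) and (Q1) supply a fixed $L^{p'}$– (resp. $L^{q'}$–) majorant; H\"older's inequality and dominated convergence then give convergence of the integral, and the usual subsequence argument promotes this to convergence of the full sequence. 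This establishes $g'\in C([0,1])$ and hence the theorem, uniformly in the four cases---the only case-dependent ingredients being the space $W$, the convergence space in Proposition \ref{lpconv2}, and the domain of integration of $\mathbb E_\sigma$.

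I expect the main obstacle to be precisely this continuity (equivalently, integrability in $\alpha$) of $\langle\Lambda_\sigma(\alpha f),f\rangle$: once Propositions \ref{gateauxprop} and \ref{lpconv2} are in hand, the rest is formal. It is the \emph{strong} convergence of the gradients from Proposition \ref{lpconv2}, combined with the continuity of $\sigma$ in $E$ (assumption (P1)), that legitimizes the passage to the limit under the integral sign, and the growth bounds (P2)--(Q1) that furnish the dominating function; these are the only non-routine points.
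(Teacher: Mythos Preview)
Your proposal is correct and follows essentially the same approach as the paper: define $g(\alpha)=(\mathbb E_\sigma\circ\mathbb U_\sigma)(\alpha f)$, use Proposition~\ref{gateauxprop} to identify $g'(\alpha)=\langle\Lambda_\sigma(\alpha f),f\rangle$, and integrate over $[0,1]$. You are in fact more careful than the paper, which simply asserts $g(1)=\int_0^1 g'(\alpha)\,\text{d}\alpha$ without explicitly checking $g(0)=0$ or the integrability/continuity of $g'$; your additional argument for the continuity of $\alpha\mapsto\langle\Lambda_\sigma(\alpha f),f\rangle$ via Proposition~\ref{lpconv2} and dominated convergence fills a gap the paper leaves implicit.
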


\begin{proof}
For $\alpha\in [0,1]$
, we set \[
g(\alpha):=(\mathbb F_\sigma\circ\mathbb{U_\sigma})  (\alpha f).
\]
From Proposition \ref{gateauxprop}, since $\mathbb F_\sigma\circ\mathbb{U_\sigma}$ is G\^{a}teaux-differentiable, $g$ is differentiable.  By replacing $f$ and $\varphi$ with $\alpha f$ and $f$, we have 
\[
g'(\alpha)=\text{d}(\mathbb F_\sigma\circ\mathbb{U_\sigma})(\alpha f;f)=\langle \Lambda_\sigma (\alpha f),f\rangle.
\]
Eventually, by integrating over the interval $\left[0, 1\right]$, we obtain
\begin{equation*}
\mathbb{F}_\sigma (u^f)=(\mathbb F_\sigma\circ\mathbb{U_\sigma})  (f)=g(1)=\int_0^1 g'(\alpha)\text{d}\alpha=\int_0^1\langle \Lambda_\sigma (\alpha f),f\rangle\  \text{d}\alpha=  \left\langle\overline{\Lambda}_\sigma \left( f\right) ,f \right\rangle.
\end{equation*}
\end{proof}

Finally, we have the Monotonicity Principle for the average DtN $\overline{\Lambda}_\sigma$.
\begin{thm}\label{monothm}
Let $1<p,q<+\infty$ and $\sigma_1$, $\sigma_2$ satisfying (P1), (P2) and (P3), then
\begin{equation}
\label{m_charge}
\sigma_1\leq\sigma_2\quad\Longrightarrow\quad \left\langle\overline{\Lambda}_{\sigma_1}  \left( f\right) ,f \right\rangle
\leq \left\langle\overline{\Lambda}_{\sigma_2}  \left( f\right) ,f \right\rangle
\quad \forall f\in X_\diamond^p(\partial\Omega),
\end{equation}
where $\sigma_1\leq \sigma_2$ is meant in the sense \eqref{defmon}, and one of the following holds
\begin{enumerate}
    \item[(i)] $A=\emptyset$;
    \item[(ii)] ${\sigma_1}, {\sigma_2}$ satisfy (Q1)-(Q2); 
    \item[(iii)] ${\sigma_1}, {\sigma_2}$ satisfy (R); 
    \item [(iv)] ${\sigma_1}, {\sigma_2}$ satisfy (S).
\end{enumerate}
\end{thm}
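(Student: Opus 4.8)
The statement is an immediate consequence of the two preceding theorems, so the plan is simply to chain them. First I would fix $f\in X_\diamond^p(\partial\Omega)$ and, in each of the four cases (i)--(iv), observe that $\sigma_1$ and $\sigma_2$ individually satisfy the hypotheses of Theorem \ref{transferthm} (namely (P1), (P2), (P3), together with the appropriate alternative among ``$A=\emptyset$'', (Q1)--(Q2), (R), (S)), so that
\begin{equation*}
\langle \overline{\Lambda}_{\sigma_1}(f),f\rangle = \mathbb{E}_{\sigma_1}(u_1^f),
\qquad
\langle \overline{\Lambda}_{\sigma_2}(f),f\rangle = \mathbb{E}_{\sigma_2}(u_2^f),
\end{equation*}
where $u_i^f$ is the minimizer associated with $\sigma_i$ in the relevant variational problem (\eqref{minimum_BA}, \eqref{minimum_PEI} or \eqref{minimum_PEC}).

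Next I would invoke Theorem \ref{monoten}: under the same assumptions and the ordering $\sigma_1\leq\sigma_2$ in the sense \eqref{defmon}, one has $\mathbb{E}_{\sigma_1}(u_1^f)\leq\mathbb{E}_{\sigma_2}(u_2^f)$. Combining the two displays gives
\begin{equation*}
\langle \overline{\Lambda}_{\sigma_1}(f),f\rangle
= \mathbb{E}_{\sigma_1}(u_1^f)
\leq \mathbb{E}_{\sigma_2}(u_2^f)
= \langle \overline{\Lambda}_{\sigma_2}(f),f\rangle,
\end{equation*}
which is exactly \eqref{m_charge}. Since $f$ was arbitrary, the proof is complete.

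The only point requiring a word of care is the one already flagged before the statement: the two conductivities need not share the same subdomain decomposition, i.e. $\sigma_i$ is defined on $\Omega=B_i\cup A_i$ with possibly different inner regions $A_i$ and inner exponents $q_i$. This causes no difficulty, because both Theorem \ref{monoten} and Theorem \ref{transferthm} are applied to $\sigma_1$ and $\sigma_2$ \emph{separately}, each with its own decomposition; the only coupling between the two is the pointwise inequality \eqref{defmon}, which is precisely what Theorem \ref{monoten} uses (through the monotonicity $Q_{\sigma_1}(x,\cdot)\leq Q_{\sigma_2}(x,\cdot)$ on all of $\Omega$). In the PEI/PEC cases (iii)--(iv) the energies reduce to integrals over the respective outer regions, but the same chaining applies verbatim. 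Consequently there is no real obstacle here: the substance of the argument lies entirely in Theorems \ref{monoten} and \ref{transferthm}, and this final theorem is their formal corollary.
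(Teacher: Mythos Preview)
Your proposal is correct and matches the paper's own proof essentially line for line: the paper simply writes $\langle\overline{\Lambda}_{\sigma_1}(f),f\rangle=\mathbb{E}_{\sigma_1}(u_1^f)\leq\mathbb{E}_{\sigma_2}(u_2^f)=\langle\overline{\Lambda}_{\sigma_2}(f),f\rangle$ by invoking Theorems~\ref{monoten} and~\ref{transferthm}. Your additional remark about the possibly different decompositions $A_i,B_i$ is accurate and mirrors the discussion the paper places just before the theorem.
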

\begin{proof}
We consider $u_{i}^f$ to be the unique solution with $\sigma=\sigma_i$, $i=1,2$, and boundary value $f$. From Theorems \ref{monoten} and \ref{transferthm}, we have
\begin{equation*}
\begin{split}
\left\langle\overline{\Lambda}_{\sigma_1}  \left( f\right) ,f \right\rangle= \mathbb{E}_{\sigma_1}( u^f_{1}) \leq \mathbb{E}_{\sigma_2}( u^f_{2})  = \left\langle\overline{\Lambda}_{\sigma_2}  \left( f\right) ,f \right\rangle
\end{split}
\end{equation*}
that proves \eqref{m_charge}.
\end{proof}

If the constitutive relationship $J_\sigma=J_\sigma \left(x,E\right)$ is strictly decreasing with respect to $E$, then the Monotonicity is reversed, i.e.
\begin{equation*}
\sigma_1\leq\sigma_2\quad\Longrightarrow\quad \left\langle\overline{\Lambda}_{\sigma_1}  \left( f\right) ,f \right\rangle
\geq \left\langle\overline{\Lambda}_{\sigma_2}  \left( f\right) ,f \right\rangle\quad\forall f\in X_\diamond^p(\partial\Omega).
\end{equation*}

\section{Numerical Examples}
\label{applications_sec}
In this Section we provide numerical evidence of the Monotonicity Principle ~\eqref{m_charge}, proved in Theorem~\ref{monothm}. The numerical examples are inspired by a real-world application: nonlinear conductive materials in superconductive wires. In the following, we refer to a standard Bi-2212 commercial round wire. The geometry of the cable is shown in Figure~\ref{fig:HTSC}.
\begin{figure}[!ht]
\includegraphics[width=0.8\textwidth]{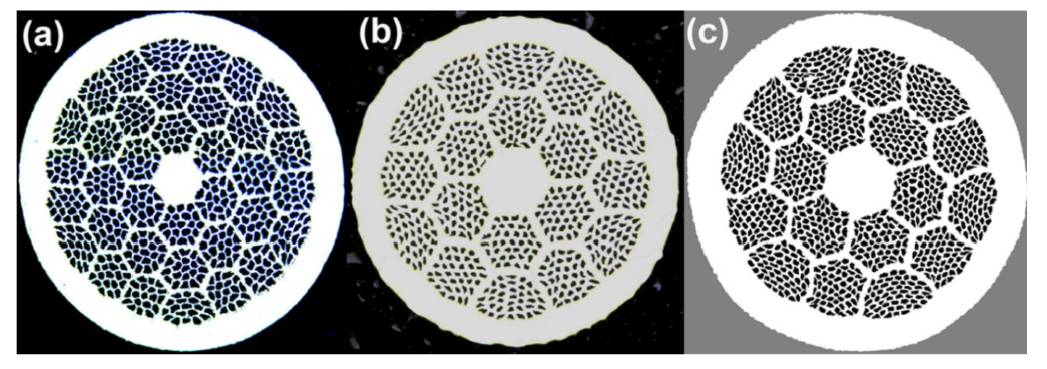}
\caption{Typical geometry for the cross section of superconducting cables. The cable is made of several petals embedded in a linear material (36 petals for (a), and 18 petals for (b) and (c)). Each petal is constituted by many thin superconductive filaments (19 wires for (a), 37 wires for (b) and 61 for (c)). This figure is reproduced with the kind permission of the publisher MDPI (see \cite[Fig. 4]{instruments4020017}).}
\label{fig:HTSC}
\end{figure}
It is made of several superconducting \lq\lq petals\rq\rq embedded in a linear matrix. The matrix is made of an AgMg alloy (Ag 0.2\%, Mg 99.8\%, electrical conductivity of $55.5\,\text{MS/m}$~\cite{li2015rrr}).

The nonlinear electrical conductivity of petals, can be derived by the so-called \emph{E-J Power Law}~\cite{rhyner1993magnetic}
\begin{equation*}
    \sigma(E) = \frac {J_c}{E_0} \left( \frac{E}{E_0} \right) ^{\frac{1-n}{n}}.
\end{equation*}
For the case of interest, we assume
\begin{displaymath}
    J_c=8000\,\text{A/mm$^2$}, \quad n=27,
\end{displaymath}
which are realistic values for these types of materials~\cite{Ba21}. The parameter $E_0=0.1\,\text{mV/m}$ is a reference value independent of the particular material~\cite{yao2019numerical}. The external radius $R_e$ is equal to 0.6 mm~\cite{huang2013bi}.

We highlight that region $A$ indicates to the petals (growth exponent $1<q<2$), while region $B$ indicates to the linear AgMg matrix (growth exponents $p=2$).

For the sake of simplicity, the numerical simulations are carried out  by considering each petal made up of a single superconductive wire, instead of many thin filaments (see Figure~\ref{fig:matdef} (left)).

As a first example, we consider a damaged wire with a crack in the AgMg matrix. Specifically, we assume the crack to be a perfect electric insulator (PEI). The geometry of the crack is shown in Figure~\ref{fig:matdef}.
\begin{figure}[htp]
\centering
\includegraphics[width=0.40\textwidth]{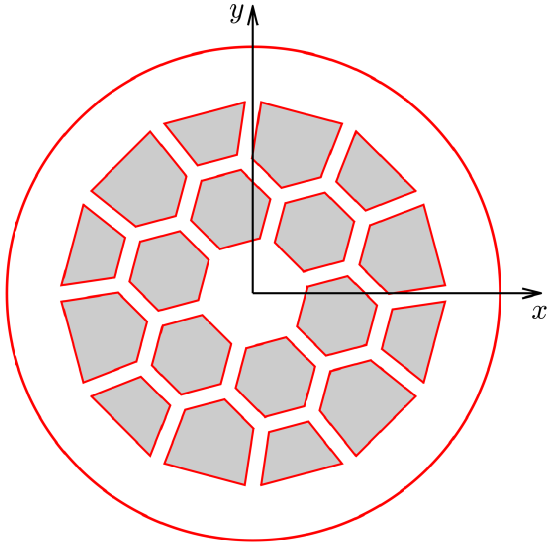}\qquad\quad \includegraphics[width=0.40\textwidth]{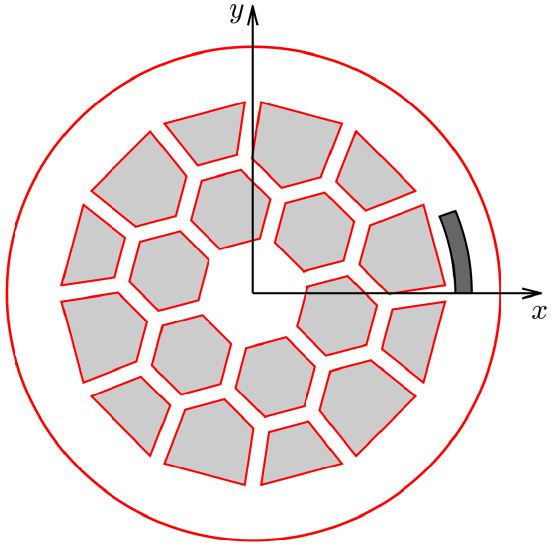}
\caption{Left: the geometry of the healthy wire. In white the linear matrix and in gray the \lq\lq petals\rq\rq \ made by the superconductive material. Right: the geometry of the damaged wire. The black region represents the crack.}
    \label{fig:matdef}
\end{figure}

Let $f$ be the applied boundary data, $\mathbb{E}_{\sigma_0}(f)=\langle\overline{\Lambda}_{\sigma_0}(f),f\rangle$ the Dirichlet Energy related to the healthy wire and $\mathbb{E}_{\sigma_1}(f)=\langle\overline{\Lambda}_{\sigma_1}(f),f\rangle$ the Dirichlet Energy related to the damaged wire. The presence of the crack reduces the electrical conductivity in region of the domain and, therefore, we expect a decrease in the Dirichlet Energy, i.e. $\mathbb{E}_{\sigma_1}(f)\geq \mathbb{E}_{\sigma_0}(f)$, $\forall f$. The numerical results in Table~\ref{tab:ex1} confirm the validity of the Monotonicity Principle.
\begin{table}[htp]
\centering
\[
\begin{array}{cccc}
\toprule
f\,\text{(V)} & \mathbb{E}_{\sigma_0} (f)\,\text{(W)}& \mathbb{E}_{\sigma_1} (f)\,\text{(W)}& \mathbb{E}_{\sigma_0}(f)\,-\mathbb{E}_{\sigma_1}(f)\,\text{(W)} \\
\midrule
100x & 6.6610\times 10^5 & 6.4155\times 10^5 & 2.4548\times 10^4 \\
300x & 3.4927\times 10^6 & 3.4194\times 10^6 & 7.3253\times 10^4 \\
500x & 7.4529\times 10^6 & 7.3145\times 10^5 & 1.3846\times 10^5 \\
0.1\sin(\theta) & 1.4723\times 10^6 & 1.4501\times 10^6 & 2.2170\times 10^4 \\
0.3\sin(\theta) & 7.4530\times 10^6 & 7.3499\times 10^6 & 1.0313\times 10^5 \\
0.5\sin(\theta) & 1.6446\times 10^7 & 1.6202\times 10^7 & 2.4423\times 10^5 \\
0.1\sin(\theta)-0.2\cos(2\theta) & 7.7056\times 10^6 & 7.5517\times 10^6 & 1.5391\times 10^5 \\
100\exp{(x^2+2y)} & 1.9288\times 10^6 & 1.9009\times 10^6 & 2.7932\times 10^4 \\
300\exp{(x^2+2y)} & 9.8394\times 10^6 & 9.7004\times 10^6 & 1.3895\times 10^5 \\
500\exp{(x^2+2y)} & 2.2051\times 10^7 & 2.1712\times 10^7 & 3.3846\times 10^5 \\
\bottomrule
\end{array}
\]
\caption{Numerical evaluation of $\mathbb{E}_{\sigma_0}(f)$ and $\mathbb{E}_{\sigma_1}(f)$ for the case of a damaged matrix. The difference $\mathbb{E}_{\sigma_0}(f)-\mathbb{E}_{\sigma_1}(f)$ is greater than $0$.}
\label{tab:ex1}
\end{table}

Another configuration of interest is that of a superconductive wire with a damaged petal, which is assumed to be a perfect insulator. The geometry is depicted in Figure~\ref{fig:ex2}.
\begin{figure}[htp]
\centering
\includegraphics[width=0.45\textwidth]{MPM_04_Sgeo2.pdf}\qquad\quad \includegraphics[width=0.45\textwidth]{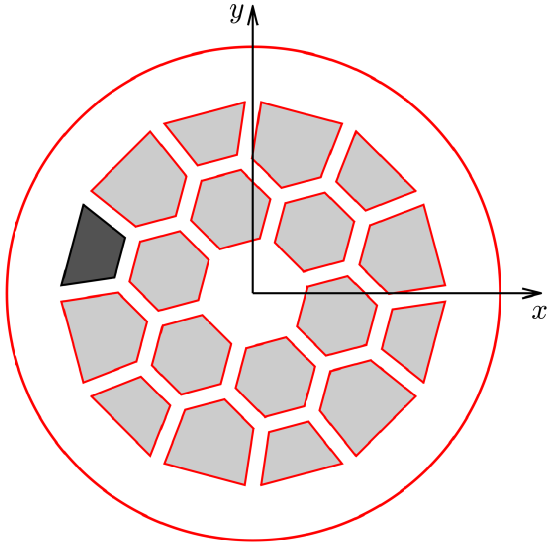}
\caption{Left: the geometry for the healthy cable. Right: the cable with a damaged petal (PEI) in black.}
\label{fig:ex2}
\end{figure}
As in the previous case, since the damage causes a loss of conductive material, we expect that the Dirichlet Energy $\mathbb{E}_{\sigma_1}(f)=\langle\overline{\Lambda}_{\sigma_1}(f),f\rangle$ for the damaged wire is always less than the Dirichlet Energy $\mathbb{E}_{\sigma_0}(f)=\langle\overline{\Lambda}_{\sigma_0}(f),f\rangle$ evaluated in presence of the healthy wire. In Table~\ref{tab:ex2} we report the results of the numerical simulations, which agree with Theorem~\ref{monothm}.
\begin{table}[htp]
\centering
\[
\begin{array}{cccc}
\toprule
f\,\text{(V)} & \mathbb{E}_{\sigma_0} (f)\,\text{(W)}& \mathbb{E}_{\sigma_1}(f)\,\text{(W)} & \mathbb{E}_{\sigma_0}(f)-\mathbb{E}_{\sigma_1}(f)\,\text{(W)} \\
\midrule
100x & 6.6610\times 10^5 & 6.2682\times 10^5 & 3.9284\times 10^4 \\
300x & 3.4927\times 10^6 & 3.3531\times 10^6 & 1.3951\times 10^5 \\
500x & 7.4529\times 10^6 & 7.2170\times 10^5 & 2.3591\times 10^5 \\
0.1\sin(\theta) & 1.4723\times 10^6 & 1.4082\times 10^6 & 6.4086\times 10^4 \\
0.3\sin(\theta) & 7.4530\times 10^6 & 7.2645\times 10^6 & 1.8850\times 10^5 \\
0.5\sin(\theta) & 1.6446\times 10^7 & 1.6139\times 10^7 & 3.0696\times 10^5 \\
0.1\sin(\theta)-0.2\cos(2\theta) & 7.7056\times 10^6 & 7.4816\times 10^6 & 2.2401\times 10^5 \\
100\exp{(x^2+2y)} & 1.9288\times 10^6 & 1.8513\times 10^6 & 7.7496\times 10^4 \\
300\exp{(x^2+2y)} & 9.8394\times 10^6 & 9.6152\times 10^6 & 2.2418\times 10^5 \\
500\exp{(x^2+2y)} & 2.2051\times 10^7 & 2.1685\times 10^7 & 3.6611\times 10^5 \\
\bottomrule
\end{array}
\]
\caption{Numerical evaluation of $\mathbb{E}_{\sigma_0}(f)$ and $\mathbb{E}_{\sigma_1}(f)$ for the case of the damaged petal. The difference $\mathbb{E}_{\sigma_0}(f)-\mathbb{E}_{\sigma_1}(f)$ is greater than $0$.}
\label{tab:ex2}
\end{table}

\section{Conclusions}
\label{conclusions_sec}
The contribution of this paper is focused on the Monotonicity Principle for nonlinear materials. Specifically, it covers the cases of (i) nonlinear materials with piecewise growth exponents and (ii) nonlinear materials with a  growth exponent in $(1,2)$.

These results make it possible to extend a fast imaging method based on the Monotonicity Principle, to the wide class of problems with two or more materials, where at least one is nonlinear.

The treatment is very general and makes it possible to model a wide variety of practical configurations, such as Superconductors (SC), or Perfect Electrical Conductors(PEC), or Perfect Electrical Insulators (PEI).

The key results for proving the Monotonicity Principle in this setting are the convergence of the scalar potential with respect to the boundary data $f$ (Proposition \ref{X_trace_inequality_lem}), and 
the Proposition \ref{gateauxprop}, which gives the connection between the Dirichlet Energy and the measured boundary data.

Numerical results related to a superconducting cable confirm the validity of the Monotonicity Principle.

\section*{Acknowledgements}
This work has been partially supported by the Italian Ministry of University and Research (projects n. 2022SLTHCE and n. 2022Y53F3X, PRIN 2022), and by the National Group for Mathematical Analysis Probability and their Applications (GNAMPA) of the Italian National Institute of Higher Mathematics (INdAM).

\section*{Data availability statement}
No new data were created or analysed in this study.

\bibliographystyle
{siam}
\bibliography{biblioCFPPT}

\end{document}